\documentclass[a4paper,12pt]{article}
\usepackage[top=2.5cm, bottom=2.5cm, left=2.5cm, right=2.5cm]{geometry}

\usepackage{graphicx}
\usepackage{amssymb}
\usepackage{amsthm}
\usepackage{amsmath}
\usepackage{authblk}
\usepackage{xcolor}
\usepackage{enumitem}
\usepackage[english]{babel}
\usepackage[utf8]{inputenc}  
\usepackage[T1]{fontenc}
\usepackage[hidelinks]{hyperref}
\usepackage{lipsum}

\newtheorem{theorem}{Theorem}[section]

\newtheorem{theoremA}{Theorem}

\newtheorem{theoremB}{Theorem}

\newtheorem{lemma}{Lemma}[section]

\newtheorem{proposition}{Proposition}[section]

\newtheorem{corollary}{Corollary}[section]

\newtheorem{definition}{Definition}[section]

\usepackage{titlesec}

\titleformat{\section}
  {\centering\normalfont\Large}
  {\thesection.}{0.5em}{}

\titleformat{\subsection}
  {\centering\normalfont\large}
  {\thesubsection.}{0.5em}{}

\titleformat{\subsubsection}
  {\centering\normalfont\normalsize}
  {\thesubsubsection.}{0.5em}{}

\title{Extension of hypercyclic and frequently hypercyclic subspaces.}

\author[1]{Felipe Carvalho Silva \thanks{\texttt{felipe.silva@ime.unicamp.br}}}
\author[2]{Geivison Ribeiro \thanks{\texttt{geivison@unicamp.br and geivison.ribeiro@academico.ufpb.br}}}
\author[3]{Régis Varão \thanks{ \texttt{varao@unicamp.br}}}

\affil[1,2,3]{\footnotesize Departamento de Matemática, Instituto de Matemática, Estatística e Computação Científica, Universidade Estadual de Campinas, Rua Sérgio Buarque de Holanda, 651, 13083-970, Campinas-SP, Brazil.}

\date{}

\begin{document}
\maketitle
\noindent\rule{\linewidth}{0.4pt}

\begin{abstract} 
\noindent 
We focus on the existence of large linear structures within the sets of hypercyclic and frequently hypercyclic vectors. For operators $T$ satisfying Kitai's Criterion or the Frequent Hypercyclicity Criterion, we analyze the fundamental linear space $\{f(T)x |  f \in H(\mathbb{C})\}$, studied  by Herrero, Bourdon, Bès, Wengenroth, and many others. We show that the set $\{f(T)x |  f \in H(\mathbb{C})\}$ can be extended within $HC(T) \cup \{0\}$ or $FHC(T) \cup \{0\}$ if $x \in HC(T)$ or $x \in FHC(T)$, respectively. The extension is such that the quotient of the new space with $\{ f(T)x \mid f \in H(\mathbb{C}) \}$ has dimension $\mathfrak{c}$ (the cardinality of the continuum). Second, we prove that generically a finite-dimensional subspace contained in $HC(T) \cup \{0\}$ can be enlarged to a subspace of dimension $\mathfrak{c}$. Third, we establish sufficient conditions for extending arbitrary linear subspaces both from  $HC(T) \cup \{0\}$ and $FHC(T) \cup \{0\}$ to larger subspaces of dimension $\mathfrak{c}$.
\end{abstract}
\bigskip
\noindent\textbf{Keywords:} Linear dynamics, lineability, subspace extension, hypercyclic operators, frequently hypercyclic operators.

\noindent\rule{\linewidth}{0.4pt}

\section{Introduction}

The search of linearity on a set which do not have a priori a linear structure is an area of research known as lineability, spaceability, and related notions. This theory has found several applications; see, for instance, \cite{Bernal.et.al.JFA,botelho.diniz.pellegrino,Cariello-favaro-seoane,cariello-seoane-juan,pellegrino-teixeira}. We also refer the reader to the Aron, Bernal-Gonzalez, Pellegrino and Sepulveda's monograph \cite{aron2015lineability} for more references and a comprehensive overview of the subject. In the present work, we focus on the problem of lineability for the sets of hypercyclic and frequently hypercyclic vectors.

The study of linear structures within the set of hypercyclic vectors has a long history, with many contributions over the years; see, for example, \cite{herrero1991limits, bourdon1993invariant, bernal2000hypercyclic, leon2001spectral, wengenroth2003hypercyclic, bes2006hypercyclic}. For a panoramic view of the early developments in the area, we refer the reader to the survey by Grosse-Erdmann \cite{grosse-erdmann1999universal}.

Among others, Herrero \cite{herrero1991limits}, Bourdon \cite{bourdon1993invariant}, Bès \cite{bes2006hypercyclic} and Wengenroth \cite{wengenroth2003hypercyclic} studied the density of the set $\{p(T)x | \text{ p is polynomial}\}$ of hypercyclic vectors.
In our paper, we investigate related sets from the perspective of constructing invariant linear structures contained in the set of hypercyclic vectors.

Recall that a continuous operator $T:X \rightarrow X$ on a Banach space is called hypercyclic if there exists a vector $x \in X$ whose orbit is dense in $X$. Such a vector is called a hypercyclic vector, and the set of all hypercyclic vectors of $T$ is denoted by $HC(T)$.

It is well known that if $T$ is hypercyclic on a Banach space, then one can construct a dense linear subspace $W \subset HC(T) \cup {0}$ with $\dim(W)=\mathfrak{c}$. Indeed, given $x \in HC(T)$, one may consider
\[
W:=\{f(T)x \mid f \in H(\mathbb{C})\}, 
\]
where $H(\mathbb{C})$ is the set of entire functions. For details, see Section 4.3 in \cite{aron2015lineability}. As mentioned above, this construction has been extensively studied, and it is natural to ask whether the set $\{ f(T)x \mid f \in H(\mathbb{C}) \}$ is maximal with respect to inclusion in $HC(T) \cup \{0\}$.

Our first result shows that, in many situations, this is not the case. The problem of extending a given linear set differs in nature from that of merely finding linear structures inside a set: one must construct a larger space while preserving the properties of the original one.

Rather than working with arbitrary hypercyclic operators, we will assume that $T$ satisfies the Kitai's Criterion (see Theorem \ref{thm:Kitai} in \S \ref{Sec:Preliminaries}). 

\begin{theoremA} \label{thm:ext_f(T)} 
For any operator $T$ satisfying Kitai's Criterion on a separable Banach space $X$ and any hypercyclic vector $x$, the set $\{ f(T)x \mid f \in H(\mathbb{C}) \}$ can be extended in $HC(T) \cup \{0\}$. Moreover, the extended set can be taken such that its quotient with $\{ f(T)x \mid f \in H(\mathbb{C}) \}$ has dimension $\mathfrak{c}$.
\end{theoremA}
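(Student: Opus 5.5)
Recall that Kitai's Criterion supplies a dense set $X_0$ on which $T^n\to 0$ pointwise, a dense set $Y_0$, and a map $S:Y_0\to Y_0$ with $TS=I$ on $Y_0$ and $S^n\to 0$ pointwise. The plan is to build a second hypercyclic vector $y$ with two properties. First, $y$ should be "linearly independent" of $W_x:=\{f(T)x \mid f\in H(\mathbb{C})\}$. Second, every combination $f(T)x+g(T)y$ with $(f,g)\neq(0,0)$ should be hypercyclic. We then take the extension $W:=\{f(T)x+g(T)y \mid f,g\in H(\mathbb{C})\}$. The quotient $W/W_x$ is then isomorphic to $\{g(T)y\}$ modulo $W_x$. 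If the map $g\mapsto g(T)y+W_x$ is injective, the quotient has dimension $\dim H(\mathbb{C})=\mathfrak{c}$.

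The key tool is the classical argument behind Section 4.3 of \cite{aron2015lineability}. If $z\in HC(T)$ and $h\neq 0$ is entire, then $h(T)$ has dense range and commutes with $T$. Hence $h(T)z$ is hypercyclic as soon as it is nonzero. So for $v=f(T)x+g(T)y$ it suffices to arrange that $g(T)y\notin \{-f(T)x\}$, and that $v$ is hypercyclic. To get the hypercyclicity of $v$ I would choose $y$ by a Baire category argument in the closed set (or $G_\delta$ framework) of vectors for which the pair $(x,y)$ behaves well. A natural choice is to require that the vector $x\oplus y$ be hypercyclic for $T\oplus T$ on $X\oplus X$. This is possible because Kitai's Criterion for $T$ implies Kitai's Criterion for $T\oplus T$, using $X_0\oplus X_0$ and $Y_0\oplus Y_0$. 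Moreover, the set of $y$ with $x\oplus y\in HC(T\oplus T)$ is a dense $G_\delta$ in $X$. To see this, write it as $\bigcap_k\bigcup_n\{y : (T^nx,T^ny)\in U_k\times V_k\}$, with density coming from the criterion. Density of each open set uses that $x$ is hypercyclic, so $T^nx$ enters $U_k$ along infinitely many $n$. One then perturbs $y$ with Kitai's maps so that $T^ny$ also lands in $V_k$ at one of those times. This density step is where I expect the main obstacle. The times at which $T^nx\in U_k$ are dictated by $x$, so we need $n$ large along that specific sequence. Kitai's Criterion gives $T^n\to0$ on $X_0$ and $S^n\to 0$ on $Y_0$ along all $n$, so large $n$ along any sequence is fine. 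Concretely, take $y'=y_0+S^n v$ with $y_0\in X_0$ near $y$ and $v\in Y_0\cap V_k$, and let $n\to\infty$ among the hitting times of $x$.

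Given such a $y$, consider $v=f(T)x+g(T)y$ with $g\neq 0$. The operator $f(T)\oplus g(T)$ commutes with $T\oplus T$. If it has dense range on $X\oplus X$, then $(f(T)x,g(T)y)$ is $T\oplus T$-hypercyclic. Projecting via the continuous surjection $(a,b)\mapsto a+b$, which intertwines $T\oplus T$ with $T$, then shows that $v$ is $T$-hypercyclic. When $f=0$ the dense-range claim fails, but then $v=g(T)y$ is directly hypercyclic by the first paragraph. When $f,g$ are both nonzero, $f(T)\oplus g(T)$ has dense range because each factor does. This in particular gives $v\neq 0$, since $0$ is not hypercyclic, so $g(T)y\notin W_x$ whenever $g\neq 0$. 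That yields injectivity, hence the quotient dimension $\mathfrak{c}$, and shows that $W\setminus\{0\}\subset HC(T)$. The case $g=0$, $f\neq 0$ is the original space $W_x$. The dense-range fact for nonzero $h(T)$ is the standard one, relying on $\sigma_p(T^*)=\emptyset$ for hypercyclic $T$, and I would cite it as in \cite{aron2015lineability}.
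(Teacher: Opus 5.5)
Your argument is correct, but it takes a different route from the paper.

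\textbf{The paper's route.} It builds the new vector explicitly as $\xi=\sum_i S^{n_{k_i}}x_i$, along a subsequence of times $n_k$ with $T^{n_k}x\to 0$. It then estimates directly that $T^{n_{k_j}}(f(T)x+g(T)\xi)$ lies within $\|f(T)\|2^{-k_j}+\|g(T)\|2^{-j+1}$ of $g(T)x_j$. The dense range of $g(T)$ finishes the proof.

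\textbf{Your route.} You obtain $y$ by a fibrewise Baire argument making $(x,y)$ hypercyclic for $T\oplus T$. You then push this dense orbit forward through the commuting dense-range operator $f(T)\oplus g(T)$ and the intertwining sum map $(a,b)\mapsto a+b$. The case $f=0$ and the injectivity of $g\mapsto g(T)y+W_x$ are handled correctly.

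One small remark: Kitai's Criterion for $T\oplus T$ only tells you that $HC(T\oplus T)$ is residual in $X\times X$. That alone does not guarantee that the fibre over your prescribed $x$ is nonempty. What actually does the work is your direct density argument with $y_0+S^n v$ along the infinite set $\{n : T^n x\in U_k\}$, and that argument is valid.

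\textbf{Comparison.} Your route gives more:
\begin{itemize}
\item the admissible $y$ form a dense $G_\delta$;
\item the pair $(f(T)x,g(T)y)$ is even jointly hypercyclic;
\item the density step only needs $T$ to be mixing, since $\{n : T^n(O)\cap V\neq\emptyset\}$ is then cofinite;
\item running the same argument with $x\oplus y_1\oplus\cdots\oplus y_n$ gives Corollary \ref{cor:sum_entire_function} directly.
\end{itemize}
The paper's explicit series construction, by contrast, uses no Baire category. Its mechanism is to approximate the dense sequence at times when $x$ is near $0$. That is exactly what carries over to the frequently hypercyclic setting of Theorem \ref{thm:ext_f(T)_Freq}, where $FHC(T)$ need not be residual and your genericity argument is not available.
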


For more on the extended set from the result above see Corollary \ref{cor:sum_entire_function} after the proof of Theorem \ref{thm:ext_f(T)} at the end of section \S \ref{Sec:Proof_ext_f(T)}.

The problem of extending a given subspace $W \subset HC(T) \cup \{0\}$  to a larger subspace still contained in $HC(T) \cup \{0\}$ remains open in general. In this work, we make progress in this direction by showing that a broad class of such subspaces can indeed be extended.

\begin{theoremA}\label{thm:G_delta_kitai}
Let $T$ be an operator satisfying Kitai's Criterion on a separable Banach space $X$.
\begin{enumerate}[label=\roman*)]
\item If $n \in \mathbb N$, then there exists a $G_\delta$-dense set $\mathcal{R}_{n} \subset X^{n}$ such that for any $(y_1, \ldots, y_n) \in \mathcal{R}_{n}$ the set $span( \{y_1, \ldots, y_n \})$ is contained $HC(T)\cup \{0\}$ and can be extended inside $HC(T)\cup \{0\}$ to a subspace of dimension $\mathfrak{c}$.

\item There exists a Mycielski set  $\mathcal{R}_{\mathbb{N}} \subset X$, such that for any $\{y_i \mid i \in \mathbb N\} \subset \mathcal{R}_{\mathbb{N}}$ the set $span(\{y_i \mid i \in \mathbb N\})$ is contained $ HC(T)\cup \{0\}$ and can be extended inside $ HC(T)\cup \{0\}$ to a subspace of dimension $\mathfrak{c}$.
\end{enumerate}
\end{theoremA}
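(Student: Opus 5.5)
Kitai's Criterion gives dense sets $X_0, Y_0\subset X$ and a map $S:Y_0\to Y_0$ such that $T^n x\to 0$ for $x\in X_0$, $S^n y\to 0$ and $TSy=y$ for $y\in Y_0$. My plan is to reduce both parts to one mechanism: build a set $\{z_t\}_{t\in I}$ with $|I|=\mathfrak{c}$ such that every nontrivial finite linear combination of the $y_i$ and the $z_t$ is hypercyclic.

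The basic tool is a Baire category lemma about tuples. Fix a countable dense set $\{v_k\}$ in $X$ and a countable dense set $\{\lambda\}\subset\mathbb{C}^n\setminus\{0\}$ of rational coefficient vectors. For a fixed finite tuple, consider
\[
\mathcal{U}_{k,m}=\Bigl\{(y_1,\dots,y_n)\in X^n : \forall\, \lambda \text{ with } \|\lambda\|=1,\ \exists N\ \ \bigl\|T^N\bigl(\textstyle\sum_i\lambda_i y_i\bigr)-v_k\bigr\|<1/m\Bigr\}.
\]
The quantifier over the unit sphere is a problem: a single $N$ will not work uniformly in $\lambda$. So instead I will require the $y_i$ to be "jointly universal" in the sense of the Kitai construction. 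For any open sets $U_1,\dots,U_n,V_1,\dots,V_n$ there should be an $N$ with $T^N U_i\cap V_i\neq\emptyset$ simultaneously for all $i$. Kitai's criterion gives this directly: pick $x_i\in X_0\cap U_i$ and $w_i\in Y_0\cap V_i$, and use $x_i+S^N w_i$. From this, the set of tuples whose orbit $(T^Ny_1,\dots,T^Ny_n)$ is dense in $X^n$ is a dense $G_\delta$ $\mathcal{R}_n$. Density in $X^n$ implies that every nonzero combination $\sum\lambda_i y_i$ is hypercyclic. Indeed, given a target $v$ with $\lambda_j\neq0$, approximate $T^Ny_j\approx v/\lambda_j$ and $T^Ny_i\approx0$ for $i\ne j$. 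This gives $\mathrm{span}\subset HC(T)\cup\{0\}$. I would use the same idea, $T\oplus\cdots\oplus T$ satisfying Kitai, throughout.

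For the extension to dimension $\mathfrak{c}$, I will use Mycielski's theorem. The relation "$(T^N u_1,\dots,T^N u_m)$ is dense in $X^m$" is a dense $G_\delta$ in $X^m$ for each $m$. Fix $(y_1,\dots,y_n)\in\mathcal{R}_n$. The set of $(z_1,\dots,z_m)$ such that $(y_1,\dots,y_n,z_1,\dots,z_m)$ has dense joint orbit is again a dense $G_\delta$ in $X^m$, provided $(y_1,\dots,y_n)$ lies in the corresponding residual fiber. I intend to secure this by Kuratowski--Ulam. Define $\mathcal{R}_n$ as the set of tuples whose fibers are comeager for every $m$; it is comeager, and I would shrink it to a dense $G_\delta$. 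The real task is to check that this is exactly $G_\delta$ and not merely comeager. This is the main obstacle, and a cleaner route is needed here. I would try to define $\mathcal{R}_n$ directly as the tuples with dense joint orbit in $X^n$ and prove the fiber statement by hand. Given $y$ with dense joint orbit and open sets, find $N$ with $T^Ny_i\in V_i$, then perturb the $z$'s using $X_0+S^NY_0$. The difficulty is that $N$ must also be compatible with the $y$'s, which requires controlling the $y$-part along a subsequence; I expect to use the approximation $S^N w\to 0$ to handle it. Once the fibers are comeager, Mycielski's theorem gives a Cantor set $C\subset X$ with $|C|=\mathfrak{c}$ such that any distinct $z_1,\dots,z_m\in C$ make the whole tuple jointly dense, hence linearly independent and with all nontrivial combinations hypercyclic. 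Then $\mathrm{span}(\{y_i\}\cup C)$ is the desired extension.

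For part ii), I apply Mycielski's theorem once more to the family of relations $R_m\subset X^m$, "joint orbit dense in $X^m$", each of which is a dense $G_\delta$. This yields a Mycielski set $\mathcal{R}_{\mathbb{N}}$, a countable union of Cantor sets, any finitely many distinct points of which have dense joint orbit. For a countable family $\{y_i\}\subset\mathcal{R}_{\mathbb{N}}$, the span lies in $HC(T)\cup\{0\}$ as before. To extend it, I would build the Mycielski set so that it contains $\mathfrak{c}$ points beyond any countable subset. Any countable subset of a perfect set of size $\mathfrak{c}$ omits $\mathfrak{c}$ points, and any finite subset of all of $\mathcal{R}_{\mathbb{N}}$ is jointly dense. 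Hence $\mathrm{span}(\mathcal{R}_{\mathbb{N}})$ itself has dimension $\mathfrak{c}$, lies in $HC(T)\cup\{0\}$, and contains $\mathrm{span}\{y_i\}$.
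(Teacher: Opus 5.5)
Your route is sound and genuinely different from the paper's. As written, though, it leaves its key step open, and the reason you give for that step being hard is mistaken.

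\textbf{Closing the fibre step.} In the Kuratowski--Ulam version nothing has to be ``exactly $G_\delta$''. The theorem only asks for \emph{some} dense $G_\delta$ set each of whose points can be extended. The property ``for every $m$ the fibre $F_m(y)=\{z\in X^m : (y,z)\in HC(T^{\oplus(n+m)})\}$ is comeager'' is inherited by subsets. Every comeager subset of the Polish space $X^n$ contains a dense $G_\delta$, so shrinking already finishes the job.

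The direct version is just as easy, and it shows that the paper's choice $\mathcal{R}_n=HC(T^{\oplus n})$ already works. Each $F_m(y)$ is a $G_\delta$, being a section of a $G_\delta$. For density, fix basic open sets $V_1\times\cdots\times V_{n+m}$ and $U'_1\times\cdots\times U'_m$, and pick $x_j\in X_0\cap U'_j$ and $w_j\in Y_0\cap V_{n+j}$. Then $z_j=x_j+S^Nw_j$ satisfies $z_j\in U'_j$ and $T^Nz_j=T^Nx_j+w_j\in V_{n+j}$ for \emph{every} $N\ge N_0$. Since $X^n$ has no isolated points, a hypercyclic tuple $y$ returns to $V_1\times\cdots\times V_n$ for infinitely many $N$, so one of these is $\ge N_0$. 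That is the whole ``compatibility'' issue. With residual fibres, Mycielski's theorem gives your Cantor set $C$, and $\mathrm{span}(\{y_1,\dots,y_n\}\cup C)$ is the required $\mathfrak{c}$-dimensional extension.

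\textbf{Comparison with the paper.} The paper takes $\mathcal{R}_n=HC(T^{\oplus n})$ and observes that the $y_i$ share a sequence $(n_k)$ along which $T^{n_k}y_i\to 0$. It then invokes Theorem~\ref{thm:ext_Kitai}, whose proof builds explicit vectors $\xi_i=\sum_l S^{a(i,l)}x_l$ with interleaved times. It then uses $\ell_\infty$-independence (Theorem~\ref{thm:ell_infty-independence}) to get an injective $A:\ell^\infty\to X$ and $W=A(\ell^\infty)\oplus\mathrm{span}\{y_i\}$.

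You replace all of this with soft Baire category and a second application of Mycielski's theorem, which is shorter. In part ii) you observe that $\mathrm{span}(\mathcal{R}_{\mathbb N})$ itself is a single dense $\mathfrak{c}$-dimensional subspace of $HC(T)\cup\{0\}$ containing every countable subfamily. This is cleaner than the paper's appeal to Theorem~\ref{thm:ext_Kitai}, which tacitly needs a diagonal choice of a common null sequence for infinitely many $y_i$.

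The paper's route, in turn, is constructive. It works for any hypercyclic subspace with a common null sequence, not only for generic tuples. It is also the argument that carries over to the frequently hypercyclic setting, where no residuality is available.
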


Where $X^{n}$ denote the product space of $X$ endowed with the product topology and $span(A)$ is the set of all finite linear combinations of elements of $A$. The set $\mathcal{R}_{\mathbb{N}}$ appearing in the statement of the above result is dense in $X$. In fact, for every open set $U \subset X$, the intersection $\mathcal{R}_{\mathbb{N}} \cap U$ contains a Cantor set. Furthermore, the sets $\mathcal{R}_{\mathbb{N}}$ can be chosen to satisfy, for any $n \in \mathbb{N}$,
\[
y_{1}, \dots , y_{n} \in \mathcal{R}_{\mathbb{N}} \text{ pairwise distinct }  \Rightarrow (y_{1} , \dots , y_{n}) \in \mathcal{R}_{n}.
\]
See Theorem \ref{thm:mycielski} on  Mycielski's set. The next technical result is needed in order to prove Theorem \ref{thm:G_delta_kitai}.

\begin{theoremA} \label{thm:ext_Kitai}
Let $T:X \rightarrow X$ be an operator satisfying Kitai's Criterion  on a separable Banach space $X$. Given a countable set $\alpha \subset \mathbb{N}$, finite or infinite, and a hypercyclic subspace $span(\{y_i \mid i \in \alpha \}) \subset HC(T) \cup \{0\}$ that admits a sequence $(n_k)$ such that $\sup_{i \in F} \|T^{n_k} y_i\| \xrightarrow[k \to \infty]{} 0$ for every finite subset $F$ of $\alpha$, then there exists a hypercyclic subspace $W$ such that $span(\{y_i \mid i \in \alpha \}) \subset W \subset HC(T) \cup \{0\}$ and $\dim W = \mathfrak{c}$.
\end{theoremA}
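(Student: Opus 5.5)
Throughout, I use Kitai's Criterion in its standard form: there are dense sets $X_0, Y_0 \subset X$ and a map $S: Y_0 \to Y_0$ with $T^n x \to 0$ for $x \in X_0$, $S^n y \to 0$ for $y\in Y_0$, and $TS = I$ on $Y_0$. The idea is to add to $V:=span(\{y_i \mid i\in\alpha\})$ a subspace $Z$ of dimension $\mathfrak{c}$. Every nonzero vector of $Z$ should be ``rapidly universal'' along a subsequence $(m_j)$ of $(n_k)$. At the same time, every vector of $Z$ should behave well enough along $(m_j)$ that adding a vector of $V$ cannot destroy density: along $(m_j)$ the $V$-part goes to $0$ on any finite set of indices, so $T^{m_j}(v+z)$ and $T^{m_j}z$ get arbitrarily close.

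\textbf{Step 1 (building blocks).} Fix a dense sequence $(w_l)$ in $X$. Using Kitai's Criterion, approximate each $w_l$ by some $u_l \in Y_0$, and set $x_j := S^{m_j} u_{l(j)}$, where $l(j)$ enumerates each $l$ infinitely often and $m_j$ is chosen inductively from $(n_k)$. The choice of $m_j$ is made sparse enough that three things hold. First, $\|x_j\| < 2^{-j}$. Second, $\|T^{m_j} x_i\| < 2^{-j}$ for $i<j$; this needs $x_i\in X_0$, so I would instead take $x_i$ to be $S^{m_i}u$ perturbed slightly into $X_0$ (as in the usual proof that Kitai implies hypercyclicity), or simply use $T^{m_j}S^{m_i} = T^{m_j-m_i}u$ with $u \in X_0\cap Y_0$-type approximations. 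Third, $\|T^{m_j}\|\sum_{i>j}\|x_i\| < 2^{-j}$, which is achieved by choosing the later $m_i$ large. Also require $\sup_{i\in F_j}\|T^{m_j}y_i\|<2^{-j}$, where $F_j=\alpha\cap\{1,\dots,j\}$. This is possible because the $m_j$ are taken from $(n_k)$ along which this supremum tends to $0$.

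\textbf{Step 2 ($\mathfrak{c}$ independent vectors).} Take an almost disjoint family $\{A_t\}_{t\in\mathbb{R}}$ of infinite subsets of $\mathbb N$, each hitting every class $\{j : l(j)=l\}$ infinitely often. For instance, split each class and use branches of a binary tree. Set $z_t := \sum_{j\in A_t} x_j$ and let $Z := span\{z_t\}$. For a finite combination $z=\sum_{r} c_r z_{t_r}$ with nonzero coefficients, the sets $A_{t_r}$ are eventually disjoint. Hence for large $j \in A_{t_1}\setminus\bigcup_{r\ge2}A_{t_r}$ we get
\[
\|T^{m_j}z - c_1 u_{l(j)}\| \lesssim 2^{-j}\textstyle\sum_r|c_r|,
\]
and such $j$ can be chosen with $l(j)$ arbitrary. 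So $c_1^{-1}T^{m_j}z$ approximates every $w_l$, and $z$ is hypercyclic because $\{\lambda T^n z\}$ dense with $\lambda\ne0$ fixed is equivalent to orbit density. This also gives linear independence modulo $V$ of $\{z_t\}$, provided that no nonzero $v\in V$ lies in $Z$. That case is handled in the next step.

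\textbf{Step 3 (mixing with $V$), the main obstacle.} Let $w=v+z$ with $v=\sum_{i\in F}a_iy_i\in V$ and $z\in Z\setminus\{0\}$. Along the same $j$ as in Step 2, which can be taken with $F\subset F_j$, we have $\|T^{m_j}v\|\le\sum|a_i|\,2^{-j}\to0$. Therefore $T^{m_j}w$ still approximates $c_1u_{l(j)}$, and $w\in HC(T)$. If $z=0$, then $w\in V\subset HC(T)\cup\{0\}$ by hypothesis. This gives $W:=V+Z\subset HC(T)\cup\{0\}$. Moreover $V\cap Z=\{0\}$: a nonzero $z$ satisfies $T^{m_j}z\not\to0$ along the chosen $j$, while every $v\in V$ satisfies $T^{m_j}v\to 0$. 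Hence $\dim W=\mathfrak{c}$, and since $\dim V\le\aleph_0$, also $\dim W/V=\mathfrak{c}$.

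The delicate point is the bookkeeping in Step 1. The control of the cross terms $T^{m_j}x_i$ for $i<j$ must come from the $X_0$ part of Kitai's Criterion, so the blocks should be built as $x_i$ with $T^{n}x_i\to0$. The control of the terms with $i>j$ comes from choosing $m_i$ so large that the small norms $\|S^{m_i}u\|$ beat $\|T^{m_j}\|$. All of this has to be done while keeping $m_j\in(n_k)$ and tracking the countably many $y_i$ via the finite sets $F_j$.
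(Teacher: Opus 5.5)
Your proof is correct, but it reaches dimension $\mathfrak c$ by a different mechanism than the paper.

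The paper constructs only countably many vectors $\xi_i=\sum_l S^{a(i,l)}x_l$, whose time sets $\{a(i,l)\}_l\subset\{n_k\}$ are pairwise disjoint. It proves $\|T^{a(i,j)}\xi_i-x_j\|<2^{-(2i+j)}$ and $\|T^{a(i',j)}\xi_i\|<2^{-(i+i'+j)}$ for $i'\neq i$. It then reaches $\mathfrak c$ dimensions analytically: it rescales via Theorem \ref{thm:ell_infty-independence} and takes $W=A(\ell^\infty)\oplus V$ with $A\lambda=\sum_k\lambda_k\gamma_k\xi_k$. This forces it to verify hypercyclicity of infinite combinations with bounded coefficients, which works because the cross bounds are summable in $k$.

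You instead produce $\mathfrak c$ vectors $z_t$ directly from an almost disjoint family of supports. So only finite combinations ever occur, and neither the independence lemma nor infinite series is needed; the combinatorics does all the work. What you actually use is that, for distinct $t_1,\dots,t_p$ and each $l$, the set $(A_{t_1}\cap\{j: l(j)=l\})\setminus\bigcup_{r\ge 2}A_{t_r}$ is infinite. A union over $l$ of per-class almost disjoint families already has this property, even if it is not globally almost disjoint.

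Two small remarks:
\begin{itemize}
\item With the paper's single-set form of Kitai's Criterion ($S:X_0\to X_0$, with $T^n\to 0$ and $S^n\to 0$ on $X_0$), your worry in Step 1 disappears, since $T^{m_j}x_i=T^{m_j-m_i}u_{l(i)}$ with $u_{l(i)}\in X_0$. Your perturbation fix also works for the two-set form.
\item With your condition $\|T^{m_j}x_i\|<2^{-j}$, the error is $O(j2^{-j})$ rather than $O(2^{-j})$, which is harmless.
\end{itemize}

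As for what each route buys: the paper's $W$ contains the range of an injective bounded operator on $\ell^\infty$. Its template of countably many vectors with disjoint time sets is also exactly what it reuses for the frequently hypercyclic version (Theorem \ref{thm:ext_CHF}). Your almost disjoint trick would not transfer verbatim there, because an uncountable almost disjoint family of subsets of $\mathbb N$ cannot consist of sets of positive lower density.
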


We now turn our attention to the set of frequently hypercyclic vectors. An operator $T$ is said to be frequently hypercyclic if there exists a vector $x \in X$ whose orbit visits every nonempty open set with positive lower density. We denote by $FHC(T)$ the set of frequently hypercyclic vector for $T$ (see Definition \ref{def:fhc-T_and_vectors}). 

The lineability of $FHC(T)$, has been less explored, partly because $FHC(T)$ is a much smaller set than $HC(T)$. Indeed, while $HC(T)$ is a dense $G_\delta$ set by Birkhoff's Transitivity Theorem, the set $FHC(T)$ does not need to share this property (See Proposition 9.19 of \cite{grosseerdmann2011linear}). Consequently, an analogue of Theorem \ref{thm:G_delta_kitai} cannot, in general, be expected in this setting.

Nevertheless, we are able to obtain results analogous to Theorems \ref{thm:ext_f(T)} and \ref{thm:ext_Kitai} for the class of frequently hypercyclic operators.

\begin{theoremB} \label{thm:ext_f(T)_Freq}
For any operator $T$ satisfying the Frequently Hypercyclicity Criterion on a separable Banach space $X$ and any frequently hypercyclic vector $x$ such that $T^{n_{k}}x \rightarrow 0$ for some increasing sequence $(n_{k})$ with positive lower density, the set $\{ f(T)x \mid f \in H(\mathbb{C}) \}$ can be extended in $FHC(T) \cup \{ 0 \}$. Moreover, the extended set can be taken such that its quotient with $\{ f(T)x \mid f \in H(\mathbb{C}) \}$ has dimension $\mathfrak{c}$.
\end{theoremB}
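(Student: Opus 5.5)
We plan to mirror the construction behind Theorem \ref{thm:ext_f(T)}. We add to $M:=\{f(T)x \mid f \in H(\mathbb{C})\}$ a family of vectors $\{z_t\}_{t\in I}$ with $|I|=\mathfrak{c}$ of a special form, and show that every nonzero element of $M+\mathrm{span}\{z_t\}$ is frequently hypercyclic. Let $S$ be the map and $X_0$ the dense set from the Frequent Hypercyclicity Criterion, so that $\sum_n T^n y$ and $\sum_n S^n y$ converge unconditionally, uniformly in $y$ on finite sets, and $TS=I$ on $X_0$. The key structural input is the hypothesis $T^{n_k}x\to 0$ along a set $A=\{n_k\}$ of positive lower density. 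Since $f(T)$ commutes with $T$, we also get $T^{n_k}f(T)x\to 0$ for every entire $f$. So on the "large" set $A$ the orbit of every element of $M$ is asymptotically negligible.

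First step, the construction. Inside $A$ we choose a family $\{A_t\}_{t\in I}$ of $\mathfrak{c}$ subsets of positive lower density that are almost disjoint, or better, pairwise "separated": $A_t\cap A_s$ is finite and the gaps grow fast. This can be done using branches of a binary tree, placing long blocks of $A$ according to the digits of $t\in\{0,1\}^{\mathbb N}$ while keeping positive lower density. Using the standard proof of the Frequent Hypercyclicity Criterion, for each $t$ we build a vector $z_t=\sum_{j}\sum_{n\in B_{t,j}} S^{n}y_j$, where $(y_j)$ is dense in $X_0$ and the $B_{t,j}\subset A_t$ are pairwise disjoint sets of positive lower density with separation conditions. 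This gives $T^n z_t$ close to $y_j$ for $n\in B_{t,j}$. The same unconditional-convergence estimates should make $T^n z_s$ small for $n\in A_t\setminus A_s$ (up to finitely many $n$), and in particular $z_s$ should satisfy $T^{n}z_s\to 0$ along $A_t$ for $t\neq s$.

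Second step, verifying frequent hypercyclicity. Take a nonzero $w=g(T)x+\sum_{i=1}^m c_i z_{t_i}$. If all $c_i=0$, we use that $x\in FHC(T)$ together with the known fact that nonzero elements of $M$ are frequently hypercyclic; this relies on $f(T)$ having dense range and commuting with $T$, and the entire $M$-lineability argument goes through for FHC. Otherwise we pick some $c_1\neq0$. For $n\in B_{t_1,j}$ (minus a finite set), $T^n w = g(T)T^nx + c_1T^nz_{t_1} + \sum_{i\ge2}c_iT^nz_{t_i}$, where the first and last terms are small and the middle term is close to $c_1y_j$. Since $\{c_1 y_j\}$ is dense, every open set is visited along a set of positive lower density. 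The set $B_{t_1,j}$ minus finitely many points still has positive lower density. The subtle point is that $T^n g(T)x\to 0$ only along all of $A$, so we need $B_{t_1,j}\subset A$, which is why everything is built inside $A$. Linear independence of the $z_t$ modulo $M$ follows from the same estimate: if $c_1z_{t_1}+\dots\in M$, then applying $T^n$ for $n\in B_{t_1,j}$ gives a contradiction unless $c_1=0$. Hence the quotient has dimension $\mathfrak{c}$.

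The main obstacle is the first step: simultaneously achieving positive lower density of each $B_{t,j}$ and smallness of cross terms $T^nz_s$ for $n\in A_t$, uniformly over $\mathfrak{c}$ many indices. Independent positive-density sets cannot be pairwise almost disjoint in general, since there can be at most finitely many disjoint sets of density above $\delta$. So we expect to relax the requirement. Instead of requiring $T^nz_s\to0$ on all of $A_t$, we would require it only on a positive-lower-density subset $A_t\setminus A_s'$ depending on the finite set $\{t_2,\dots,t_m\}$. Concretely, we would encode $t$ by choosing, at scale $N_k$, one of two interleaved residue classes of $A$. Two distinct $t,s$ then differ at infinitely many scales, but that only yields upper density. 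So we would instead use $\mathfrak{c}$ many linearly independent weighted combinations $z_t=\sum_j \lambda_j(t) u_j$ of countably many basic FHC vectors $u_j$ supported on disjoint density sets. Here $\lambda(t)$ ranges over a $\mathfrak{c}$-sized family of sequences in which any finitely many are "eventually independent". This is the step we would attempt first.
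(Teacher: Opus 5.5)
There is a genuine gap: the proposal never produces the $\mathfrak{c}$-sized family on which everything else rests. Your second step needs $T^n z_{t_i}$ to be small along $B_{t_1,j}$ for $t_i\neq t_1$, which is exactly the separation property of the family $\{A_t\}$ from your first step. As you note yourself, superadditivity of lower density on disjoint sets allows at most countably many almost disjoint sets of positive lower density, so that first step is impossible, and the residue-class encoding only yields upper density. What remains is the one-line fallback $z_t=\sum_j\lambda_j(t)u_j$ with ``eventually independent'' coefficients. For it you state neither the cross-term estimates the $u_j$ must satisfy nor why they survive summation against bounded coefficients, and that is where the whole proof lives.

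The fallback can be completed. Build countably many vectors $u_l$ on the disjoint families $A(l,j,N_j)$ of Lemma \ref{lem:positive_density_alt}, with all visiting times taken inside $(n_k)$ and $\|T^{n_r}u_{l'}\|\leq C\,2^{-(l'+j)}$ for $r\in A(l,j,N_j)$, $l'\neq l$. Then use $\lambda\mapsto\sum_l\lambda_l u_l$ on $\ell^\infty$ as in the proof of Theorem \ref{thm:ext_CHF}. Since the cross terms decay in $j$, any nonzero $\lambda\in\ell^\infty$ works and no eventual independence is needed. None of this is in your proposal, though.

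The idea you are missing makes the $\mathfrak{c}$-family unnecessary. You already use that, for nonzero entire $g$, $g(T)$ has dense range and commutes with $T$; apply this to the new vector as well. The paper adds a single vector $\xi=\sum_{k\in A}S^{n_k}z_k$, where $A=\bigcup_\nu A(\nu,N_\nu)$ and $z_k=x_j$ for $k\in A(j,N_j)$, so all visiting times lie in $(n_k)$. For $\kappa\in A(j,N_j)$ large, $T^{n_\kappa}(f(T)x+g(T)\xi)$ lies within $\|f(T)\|\,\|T^{n_\kappa}x\|+C\|g(T)\|2^{-j}$ of $g(T)x_j$, and $\{g(T)x_j\}$ is dense. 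Hence $f(T)x+g(T)\xi\in FHC(T)$ whenever $g\neq0$, with positive lower density of the visiting times coming from Lemma \ref{lem:positive_density_subsequence}. This simultaneously gives $\{f(T)x\mid f\in H(\mathbb{C})\}\cap\{g(T)\xi\mid g\in H(\mathbb{C})\}=\{0\}$ and injectivity of $g\mapsto g(T)\xi$. So the complement $\{g(T)\xi\mid g\in H(\mathbb{C})\}$ already has dimension $\dim H(\mathbb{C})=\mathfrak{c}$.
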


More on the extended set from the result above see Corollary \ref{cor:sum_entire_function_freq} after the proof of Theorem \ref{thm:ext_f(T)_Freq} at the end of section \S \ref{Sec:Proof_ext_f(T)_Freq}. For the Frequent Hypercyclicity Criterion see Theorem \ref{thm:CHF}. 

A careful reader will notice that the proof of the Frequent Hypercyclicity Criterion  and both proofs of Theorem \ref{thm:ext_f(T)_Freq} and \ref{thm:ext_CHF} involve the construction of vectors satisfying the property of (uniform) convergence to zero under some sequence of positive lower density. Therefore, this is a condition that we already know can be fulfilled for our final result.

\begin{theoremB} \label{thm:ext_CHF}
Let $X$ be a separable Banach space and let $T:X \to X$ be an operator satisfying the Frequent Hypercyclicity Criterion.
Given a countable set $\alpha$, finite or infinite, and a hypercyclic subspace $span(\{y_i \mid i \in \alpha \}) \subset FHC(T) \cup \{0\}$ that admits an increasing sequence $(n_{k})$ with positive lower density and $\sup_{i \in F} \|T^{n_k} y_i\| \xrightarrow[k \to \infty]{} 0$ for every finite subset $F$ of $\alpha$, then there exists a frequently hypercyclic subspace $W$ such that $span(\{y_i \mid i \in \alpha \}) \subset W \subset FHC(T) \cup \{0\}$ and $\dim W = \mathfrak{c}$.
\end{theoremB}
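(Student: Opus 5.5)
We will construct $W$ as $span(\{y_i \mid i\in\alpha\}) \oplus span(\{z_t \mid t\in \mathcal{T}\})$, where $\{z_t\}$ is a family of cardinality $\mathfrak{c}$ built from the data in the Frequent Hypercyclicity Criterion (Theorem \ref{thm:CHF}). That is, we use the dense set $X_0$ and the right inverse $S$, with $\sum_n T^n x$ and $\sum_n S^n x$ unconditionally convergent and $TSx=x$ for $x\in X_0$.

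\textbf{Step 1: the sets of return times.} We fix a dense sequence $(u_j)$ in $X_0$. Following the proof of the Frequent Hypercyclicity Criterion, we choose pairwise disjoint sets $A_j\subset\mathbb{N}$ of positive lower density, with gaps between distinct elements growing fast. We choose them inside the given sequence $(n_k)$, which has positive lower density, so that a positive-density subset of it can be split into such well-separated pieces. To obtain $\mathfrak{c}$ vectors, we index by an almost disjoint family $\{N_t\}_{t\in\mathcal{T}}$ of infinite subsets of $\mathbb{N}$ with $|\mathcal{T}|=\mathfrak{c}$. We set
\[
z_t=\sum_{j\in N_t}\sum_{n\in A_j} S^{n}u_{\sigma(j)},
\]
where $\sigma$ is arranged so that each $u_m$ appears infinitely often along every $N_t$. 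Unconditional convergence together with the separation of the $A_j$ shows that $z_t$ is well defined. It also gives $\|T^{n}z_t-u_{\sigma(j)}\|$ small for $n\in A_j$, $j\in N_t$, and $\|T^n z_t\|$ small for $n\in A_j$, $j\notin N_t$. The $A_j$ are also chosen inside $(n_k)$, with $k$ large enough that $\sup_{i\in F_j}\|T^{n}y_i\|\le \varepsilon_j$ on $A_j$ for increasing finite $F_j\uparrow\alpha$. This is possible by the hypothesis of uniform convergence to $0$ on finite subsets.

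\textbf{Step 2: every nonzero vector of $W$ is frequently hypercyclic.} Take $w=y+\sum_{l=1}^m c_l z_{t_l}$ with $y\in span\{y_i\}$.

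If all $c_l=0$, then $w=y\in FHC(T)$ by hypothesis.

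Otherwise, by almost disjointness there are infinitely many $j\in N_{t_1}\setminus\bigcup_{l\ge2}N_{t_l}$, and among them the $u_{\sigma(j)}$ run through a dense set. For $n\in A_j$ with such $j$ large, we get
\[
T^n w\approx T^n y + c_1 u_{\sigma(j)} + \sum_{l\ge 2} c_l\, T^n z_{t_l}\approx c_1u_{\sigma(j)},
\]
because $T^ny\to0$ along $A_j$ (as $y$ lies in the span of finitely many $y_i$, eventually in $F_j$) and the other $T^nz_{t_l}$ are small. Since $\{c_1u_m\}$ is dense, and each target is hit along a set $\bigcup A_j$ of positive lower density, $w$ is frequently hypercyclic.

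Linear independence of the $z_t$ modulo $span\{y_i\}$ follows from the same estimate: a nontrivial combination is not small along $A_j$ although $span\{y_i\}$ vectors are. Hence $\dim W=\mathfrak{c}$.

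\textbf{Main obstacle.} The hardest step is the positive lower density bookkeeping in Step 1. Each individual $A_j$ must have positive lower density, must lie inside $(n_k)$ where the $y_i$ with $i\in F_j$ are already small, and must keep separation from all other $A_{j'}$, so that the cross terms $T^nS^{n'}u$ with $n\neq n'$ are summably small uniformly in $t$.

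I would first try to use the standard lemma from the proof of the criterion, which gives disjoint sets of positive lower density with $|n-n'|\ge N_j+N_{j'}$. I would apply it relative to the density-positive sequence $(n_k)$, which requires checking that intersecting with a tail of $(n_k)$ preserves positive lower density. The requirement that every finite combination with $c_1\neq0$ still returns with positive density is handled by taking $N_{t_1}\setminus\bigcup_{l\ge2}N_{t_l}$ cofinite in $N_{t_1}$, so that the relevant $A_j$ differ from those of $z_{t_1}$ by only finitely many $j$.
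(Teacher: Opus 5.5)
Your proof is correct, but it reaches dimension $\mathfrak{c}$ by a different mechanism than the paper.

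\textbf{The paper's route.} It builds only countably many vectors $\xi_l=2^{-l}\sum_{k\in A_l}S^{n_k}z_k$, each on its own family of return-time sets $A(l,j,N_j)$ from Lemma \ref{lem:positive_density_alt}. It then passes to $\mathfrak{c}$ dimensions through $\ell_\infty$-independence (Theorem \ref{thm:ell_infty-independence}), using the injective map $(\lambda_i)\mapsto\sum_i\lambda_i\gamma_i\xi_i$ on $\ell^\infty$. This forces it to show that vectors with infinitely many nonzero coefficients are frequently hypercyclic. That is why it needs the weights $2^{-l}$ and the bound $\|T^{n_r}\xi_l\|<2^{-(l+j-2)}$ off the $l$-th family, which is summable over $l$.

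\textbf{Your route.} You produce $\mathfrak{c}$ vectors at once by sharing the blocks $A_j$ among the members of an almost disjoint family. Only finite combinations ever occur, and each $z_{t_1}$ keeps cofinitely many private blocks. Linear independence modulo $span\{y_i\}$ then drops out of $w\neq 0$, so Theorem \ref{thm:ell_infty-independence} is not needed at all. You also make the $y_i$ small on the whole of $A_j$ by placing it in a suitable tail, rather than only on a cofinite part as the paper does.

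\textbf{What each buys.} The paper's approach treats this case uniformly with the Kitai case and yields a $W$ containing the injective bounded image of $\ell^\infty$. Yours is more elementary and its estimates are simpler.

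Two details should be written out.

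\textbf{The map $\sigma$.} A single $\sigma$ hitting every index infinitely often along \emph{every} $N_t$ requires a concrete choice of the family. For instance, identify $\mathbb{N}$ with finite $0$--$1$ strings and let $N_t$ be the set of initial segments of $t\in\{0,1\}^{\mathbb{N}}$. Then let $\sigma$ depend only on the length of the string, through a map taking each value infinitely often. The thresholds $N_j$ must be chosen for $u_{\sigma(1)},\dots,u_{\sigma(j)}$, not for $u_1,\dots,u_j$.

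\textbf{The density bookkeeping.} Work in index space, as the paper does. Set $A_j=\{n_r : r\in A(j,\nu_j)\}$ with $A(j,\nu)$ from Lemma \ref{lem:positive_density} and $\nu_j\ge\max(N_j,K_j)$. Here $K_j$ is chosen so that $\sup_{i\in F_j}\|T^{n_k}y_i\|\le\varepsilon_j$ for $k\ge K_j$. Then:
\begin{itemize}
\item $|n_r-n_{r'}|\ge|r-r'|$ transfers the separation;
\item $r\ge\nu_j$ places $A_j$ in the required tail;
\item Lemma \ref{lem:positive_density_subsequence} gives $\text{freq}(A_j)\ge\text{freq}(A(j,\nu_j))\cdot\text{freq}((n_k))>0$.
\end{itemize}
If instead you literally intersect a set of positive lower density with the range of $(n_k)$, as your last paragraph could be read to suggest, the result may have density zero, since two sets of positive lower density can be disjoint.
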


\section{Preliminaries} \label{Sec:Preliminaries}
We state some results and definitions needed for our results.

\begin{theorem}[Kitai's Criterion] \label{thm:Kitai}
Let $T:X \rightarrow X$ be an operator on a separable Banach space $X$. If there is a dense subset $X_{0} \subset X$ and a map $S: X_{0} \rightarrow X_{0}$ such that, for any $x \in X_{0}$,

\begin{enumerate}
\item $T^{n} x \rightarrow 0$;    
\item $S^{n} x \rightarrow 0$;
\item $TS x = x$.
\end{enumerate}
Then $T$ is mixing, and in particular hypercyclic.
\end{theorem}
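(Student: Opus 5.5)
The plan is to prove that $T$ is topologically mixing: for every pair of nonempty open sets $U,V \subset X$ there is $N$ such that $T^n(U)\cap V \neq \emptyset$ for all $n \ge N$. Hypercyclicity then follows from Birkhoff's Transitivity Theorem, since $X$ is a separable Banach space (hence a separable complete metric space without isolated points, assuming $X\neq\{0\}$). Mixing gives topological transitivity, and transitivity gives a dense $G_\delta$ set of hypercyclic vectors.

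To establish mixing, I would fix nonempty open sets $U,V$. Since $X_0$ is dense, I pick $x \in X_0\cap U$ and $y \in X_0 \cap V$, together with $\varepsilon>0$ such that the balls $B(x,\varepsilon)\subset U$ and $B(y,\varepsilon)\subset V$. For each $n$ I set
\[
z_n := x + S^n y .
\]
By hypothesis (2), $S^n y \to 0$, so $z_n \in U$ for all large $n$. Next I compute $T^n z_n$. Iterating (3) gives $T^n S^n y = y$: indeed $S^{k}y\in X_0$ for all $k$ because $S$ maps $X_0$ into $X_0$, so one can peel off $TS=\mathrm{id}$ on $X_0$ one factor at a time. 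Hence
\[
T^n z_n = T^n x + y,
\]
and by (1) this tends to $y$. So $T^n z_n \in V$ for all large $n$. Taking $N$ to exceed both thresholds gives $z_n \in U$ and $T^n z_n\in V$ for all $n\ge N$, which is exactly $T^n(U)\cap V\neq\emptyset$ for $n \ge N$.

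The only real point of care is the identity $T^nS^n y = y$. This requires the orbit of $y$ under $S$ to stay inside $X_0$, which holds because $S: X_0\to X_0$; note that $S$ need not be linear or continuous, and nothing else about it is used. The remaining step is to invoke Birkhoff's theorem for the passage from mixing/transitivity to hypercyclicity, a standard Baire category argument. There I would write $HC(T)=\bigcap_j\bigcup_n T^{-n}(V_j)$ over a countable base $(V_j)$ of the topology; each union $\bigcup_n T^{-n}(V_j)$ is open and dense by transitivity, so the intersection is a dense $G_\delta$ set, in particular nonempty.
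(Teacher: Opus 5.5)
Your proof is correct and complete. The paper gives no proof of this statement; it quotes Kitai's Criterion in the preliminaries as a known result. What you wrote is the standard textbook argument: take $z_n=x+S^ny$ with $x\in X_0\cap U$, $y\in X_0\cap V$, and use $T^nS^ny=y$ to get mixing. That identity holds because $S$ maps $X_0$ into itself, and it needs neither linearity nor continuity of $S$. You then pass to hypercyclicity through $HC(T)=\bigcap_j\bigcup_{n\ge 0}T^{-n}(V_j)$ and Baire's theorem.

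One minor remark: the absence of isolated points is only needed for the converse direction of Birkhoff's theorem. The implication from transitivity to a dense $G_\delta$ set of hypercyclic vectors uses only that $X$ is a second-countable complete metric space, so your caveat $X\neq\{0\}$ is unnecessary.
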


We say the operator $T$ is weakly mixing if $T\oplus T$ is hypercyclic. 
For a set $A \subset \mathbb{N}$, its frequency (or lower density) is defined by 
\[
\text{freq}(A) = \liminf _{N \rightarrow \infty} \frac{\# \lbrace 0 \leq n \leq N \mid n \in A \rbrace}{N+1}.
\]
\begin{definition}\label{def:fhc-T_and_vectors}
An operator $T:X \rightarrow X$ is said to be frequently hypercyclic if there exist $x \in X$ such that, for every nonempty set $U \subset X$,

\[
\liminf _{N \rightarrow \infty} \frac{\# \lbrace 0 \leq n \leq N \mid T^{n} x \in U \rbrace}{N+1} > 0.
\]

In this case, $x$ is called a frequently hypercyclic vector and the set of such vectors are denoted by $FHC(T)$.
\end{definition}

Equivalently, $T:X \rightarrow X$ is frequently hypercyclic if and only if there exists $x \in X$ such that, for every open set $U \subset X$, there exists an increasing sequence $(n_{k})$ of order $k$ such that
\[
T^{n_{k}}x \in U \text{ for all } k \in \mathbb{N}.
\]
Here, the sequence $(n_{k})$ is said to be of order $k$, and we denote $(n_{k}) = \mathcal{O}(k)$,  if the sequence $\left( \frac{n_{k}}{k} \right)$ is bounded. In this case, the frequency with which $x$ visits $U$ is 
\[
\liminf_{n \rightarrow \infty} \frac{k}{n_{k}}.
\]

\begin{lemma}\label{lem:positive_density_subsequence}
Let $(n_{k}) \subset \mathbb{N}$ and $(m_{k}) \subset \mathbb{N}$ be two sequences with positive lower density. Then, $(m_{n_{k}})$ has positive lower density and
\[
\text{freq}[(m_{n_{k}})] \geq \text{freq}[(m_{k})] \cdot \text{freq}[(n_{k})]
\]
\end{lemma}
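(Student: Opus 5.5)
The plan is to read $(n_k)$ and $(m_k)$ as strictly increasing enumerations of subsets $A=\{n_k\}$ and $B=\{m_k\}$ of $\mathbb{N}$, and to work with the counting functions
\[
N_A(N)=\#\{k : n_k\le N\},\qquad N_B(N)=\#\{k: m_k\le N\}.
\]
Let $\delta=\text{freq}[(m_k)]>0$ and $\epsilon=\text{freq}[(n_k)]>0$. The set to study is $C=\{m_{n_k}\}$, which is again strictly increasing because both sequences are. Its counting function is
\[
N_C(N)=\#\{k : m_{n_k}\le N\}.
\]

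First I would relate $N_C$ to the other two counting functions. Since $m$ is increasing, $m_{n_k}\le N$ holds if and only if $n_k\le N_B(N)$, where $m$ is indexed from $1$. This yields the identity
\[
N_C(N)=N_A\bigl(N_B(N)\bigr).
\]
If the paper's indexing convention starts at $0$, this identity is off by one, which is harmless in the limit.

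Next comes the lower bound. Fix $\eta>0$. By definition of lower density there is $M_0$ such that $N_A(M)\ge(\epsilon-\eta)(M+1)$ for all $M\ge M_0$, and there is $N_0$ such that $N_B(N)\ge(\delta-\eta)(N+1)$ for all $N\ge N_0$. Since $\delta>0$, we have $N_B(N)\to\infty$, so for large $N$ we also get $N_B(N)\ge M_0$. Then
\[
\frac{N_C(N)}{N+1}=\frac{N_A(N_B(N))}{N_B(N)+1}\cdot\frac{N_B(N)+1}{N+1}\ge(\epsilon-\eta)(\delta-\eta).
\]
Taking the $\liminf$ and letting $\eta\to 0$ gives $\text{freq}(C)\ge\epsilon\delta>0$, which proves both positivity and the stated inequality.

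The only delicate step is the bookkeeping. One must identify $N_C$ with $N_A\circ N_B$ exactly, and this uses the monotonicity of $m$. One must also check that $N_B(N)\to\infty$ before applying the density bound for $A$ at the argument $N_B(N)$. If the frequency is taken in the equivalent form $\liminf k/n_k$, the same argument works more quickly:
\[
\frac{k}{m_{n_k}}=\frac{k}{n_k}\cdot\frac{n_k}{m_{n_k}},
\]
and since $n_k\to\infty$, we have
\[
\liminf_k\frac{n_k}{m_{n_k}}\ge\liminf_j\frac{j}{m_j}=\delta.
\]
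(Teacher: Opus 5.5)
Your proof is correct, and your main argument takes a different route from the paper's. You work directly with the counting-function definition of $\text{freq}$, which is the definition the paper states. The key step is the exact identity $N_C(N)=N_A(N_B(N))$, which follows from the monotonicity of $(m_k)$. Splitting $\frac{N_C(N)}{N+1}=\frac{N_A(N_B(N))}{N_B(N)+1}\cdot\frac{N_B(N)+1}{N+1}$ then gives the product bound; take $0<\eta<\min(\epsilon,\delta)$ so that both factors are nonnegative when you multiply.

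The paper instead uses only the equivalent form $\text{freq}[(n_k)]=\liminf_k k/n_k$. It factors $\frac{k}{m_{n_k}}=\frac{k}{n_k}\cdot\frac{n_k}{m_{n_k}}$ and applies $\liminf(ab)\ge\liminf a\cdot\liminf b$, which is exactly the shortcut in your last paragraph.

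The paper's route is a two-line computation, but it relies on the equivalence between the two forms of lower density, which the paper only asserts. Your counting-function route is self-contained relative to the official definition. It costs some index bookkeeping, which you handle, including the check that $N_B(N)\to\infty$ before invoking the density bound for $(n_k)$.

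One small point in your favour: the paper writes $\liminf_k n_k/m_{n_k}=\text{freq}[(m_k)]$. In general this is only the inequality $\ge$, because $(n_k)$ may avoid the indices where $j/m_j$ is small. Your shortcut states it correctly as $\ge$, which is all the lemma needs.
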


\begin{proof}
Due to the basic properties of $\liminf$, we have that
\begin{align*}
\text{freq}[(m_{n_{k}})] 
&= \liminf _{k \rightarrow \infty} \frac{k}{m_{n_{k}}} \\
&= \liminf _{k \rightarrow \infty} \frac{k}{n_{k}} \cdot \frac{n_{k}}{m_{n_{k}}} \\
&\geq \left( \liminf _{k \rightarrow \infty} \frac{k}{n_{k}} \right) \cdot \left( \liminf _{k \rightarrow \infty} \frac{n_{k}}{m_{n_{k}}} \right) \\
&= \text{freq}[(m_{k})] \cdot \text{freq}[(n_{k})]
\end{align*}
\end{proof}

\begin{lemma} \label{lem:positive_density}
There exist pairwise disjoint subsets $A(i,\nu)$, $i,\nu \geq 1$, of $\mathbb{N}_0$ 
with positive lower density such that, for any $n \in A(j,\nu)$ and 
$m \in A(k,\mu)$, we have that $n \geq \nu$ and
\[
|n-m| \geq \nu + \mu \quad \text{if } n \neq m.
\]
\end{lemma}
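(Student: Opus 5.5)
This is the standard lemma from the proof of the Frequent Hypercyclicity Criterion (Bayart–Grivaux, Grosse-Erdmann–Peris). The plan is to build the sets $A(i,\nu)$ from a single family of positive-density sets via a base-2 (dyadic) coding, so that disjointness and positive density are automatic. Then I would pass to a sparse "arithmetic progression" type refinement to get the separation condition $|n-m|\ge \nu+\mu$.

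\textbf{Step 1: a disjoint family.} Enumerate the pairs $(i,\nu)\in\mathbb{N}^2$ by a bijection $\phi:\mathbb{N}^2\to\mathbb{N}$. For $p\in\mathbb{N}$ set
\[
D_p=\{\,n\in\mathbb{N} : n\equiv 2^{p-1}\pmod{2^{p}}\,\},
\]
the integers whose $2$-adic valuation equals $p-1$. These sets are pairwise disjoint, and $\mathrm{freq}(D_p)=2^{-p}>0$. Put $B(i,\nu)=D_{\phi(i,\nu)}$.

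\textbf{Step 2: separation.} Separation fails because distinct sets can contain nearby integers. To fix this, I would dilate and thin each set. Fix a rapidly growing weight, for instance replace $B(i,\nu)$ by
\[
A(i,\nu)=\{\, 2^{\nu+1}\,\cdot n \cdot M : n\in B(i,\nu),\ n\ge \nu\,\}\cap \Lambda_\nu .
\]
A cleaner choice is the one used in Grosse-Erdmann–Peris, Lemma 9.5. Decompose the positive integers into blocks, and within the set coded by $(i,\nu)$ keep only the multiples of $2\nu$ lying far from the points of the other sets.

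Concretely, I would set
\[
A(i,\nu)=\{\, m \cdot 2^{\phi(i,\nu)+1}\cdot \nu' : \ldots \}.
\]
Here the key point is the following. If every element of $A(j,\nu)$ is of the form $2^{q}(2r+1)$ with $q=q(j,\nu)$ large and depending on $\nu$, then for two distinct elements $n\in A(j,\nu)$ and $m\in A(k,\mu)$, the difference $|n-m|$ is a nonzero multiple of $2^{\min(q(j,\nu),q(k,\mu))}$. If we choose $q(j,\nu)\ge \log_2(2\nu)+\phi$-dependent growth, with $q$ increasing in $\nu$, then $2^{\min(q,q')}\ge$ both $2\min(\nu,\mu)$.

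This gives only $\ge 2\min(\nu,\mu)$, not $\nu+\mu$. The main obstacle is therefore handling the case where one of $\nu,\mu$ is small and the other large. I would fix this by an additional thinning. Inside $A(k,\mu)$, with $\mu$ small, remove all points within distance $\nu+\mu$ of points of sets with larger index. For this removal to be controlled, the sets with larger $\nu$ must be sparse enough relative to $\nu$.

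The plan is to use the valuation structure. Points of the set with index $(j,\nu)$ have valuation $q(j,\nu)\ge$ something like $2\nu+\phi(j,\nu)$. Points of $A(k,\mu)$ have valuation exactly $q(k,\mu)<q(j,\nu)$ whenever $\nu$ is much larger, so they lie at distance at least $2^{q(k,\mu)}$ from multiples of $2^{q(k,\mu)+1}$. This is not enough on its own. Instead, I would restrict $A(k,\mu)$ to integers whose residue modulo $2^{q(k,\mu)+1}$ is away from $0$, and accept that the density loss is a fixed positive fraction.

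Alternatively, I would fall back to the direct inductive construction. Build all the sets simultaneously along consecutive blocks $[2^{s},2^{s+1})$, placing in each block the points of the finitely many active sets $(i,\nu)$ with $\phi(i,\nu)\le s$. The points of $A(i,\nu)$ are spaced with gap $\ge 2\nu$ times a factor $C_{i,\nu}$ chosen so that $\sum 1/C_{i,\nu}$ is small. Points of different sets are interleaved at positions separated by the maximum of the two required gaps.

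Finally, I would verify positive lower density. Each $A(i,\nu)$ occupies a fixed fraction $c_{i,\nu}>0$ of every long block once it becomes active. Lemma~\ref{lem:positive_density_subsequence}-type multiplicative estimates then give $\mathrm{freq}(A(i,\nu))\ge c_{i,\nu}/2>0$. The condition $n\ge\nu$ holds because each $A(i,\nu)$ only starts after its activation time.
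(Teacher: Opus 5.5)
The paper gives no proof of this lemma: it quotes it as the classical lemma from the Bayart--Grivaux / Grosse-Erdmann--Peris proof of the Frequent Hypercyclicity Criterion. Your proposal therefore has to stand on its own, and as written it has a genuine gap.

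Your main line cannot succeed, for a structural reason. This covers the sets $D_p$ of Step~1, the dilations, the $2$-adic valuations and the residue restrictions. In any family as in the lemma, each $A(k,\mu)$ has arbitrarily long gaps. For any $\nu$, pick $j\neq k$ and $n\in A(j,\nu)$ (an infinite set). Disjointness excludes $m=n$ and separation excludes $0<|n-m|<\nu+\mu$, so $A(k,\mu)$ misses an interval of length about $2(\nu+\mu)$ around every point of $A(j,\nu)$. A set cut out by congruence conditions modulo a fixed modulus is eventually periodic, hence has bounded gaps, so it can never work.

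Concretely, your fix ``restrict $A(k,\mu)$ to residues modulo $2^{q(k,\mu)+1}$ away from $0$'' is empty. Every integer of $2$-adic valuation exactly $q$ is already $\equiv 2^{q}\pmod{2^{q+1}}$. Moreover $2^{Q}(2r+1)+2^{q}$ has valuation $q$ whenever $Q>q$. So points of valuation $q$ sit at distance exactly $2^{q}$ from points of the higher-valuation sets, however large their $\nu$ is. The displayed definitions involving $M$, $\Lambda_\nu$, $\nu'$ and ``$\ldots$'' are placeholders, not constructions.

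The correct idea does appear in your text twice: deleting from $A(k,\mu)$ the points within $\nu+\mu$ of points of sparser sets, and the block fallback. In neither place do you carry out the step that matters, namely a budget showing each set keeps a fixed fraction of its points. ``Interleaved at positions separated by the maximum of the two required gaps'' is only an assertion, and the density bound has nothing to do with the subsequence-density lemma.

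The fallback closes in a few lines once you recast separation as disjointness of intervals.
\begin{itemize}
\item Choose $c_{i,\nu}>0$ with $\sum_{i,\nu}2\nu c_{i,\nu}\le 1$, e.g.\ $c_{i,\nu}=2^{-i-\nu-1}/\nu$.
\item In each block $J_s=[2^s,2^{s+1})$, place, for every $(i,\nu)$, $\lfloor c_{i,\nu}2^s\rfloor$ integer intervals $[p-\nu,p+\nu)$, all pairwise disjoint and contained in $J_s$. This is possible: only finitely many floors are nonzero, and the total length is at most $2^s$.
\item Let $A(i,\nu)$ be the set of all centers $p$ of intervals of type $(i,\nu)$, over all $s$.
\end{itemize}
Disjointness of the intervals gives pairwise disjoint sets and $|n-m|\ge\nu+\mu$ for distinct centers, since the left interval must end before the right one begins. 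Also $p\ge 2^s+\nu\ge\nu$. For $N\in[2^{s+1},2^{s+2})$, $A(i,\nu)$ has at least $c_{i,\nu}2^s-1\ge c_{i,\nu}N/4-1$ points in $[0,N]$, so its lower density is at least $c_{i,\nu}/4$. The thinning route can also be made to work, provided the deletions are made near the actual, non-periodic points of the sparser sets and you check that the total deleted proportion is summably small.
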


The previous lemma is stated in its classical form, which is commonly used in the construction of a frequently hypercyclic vector and will be relevant for the proof of Theorem \ref{thm:ext_f(T)_Freq}. However, in the proof of Theorem \ref{thm:ext_CHF}, infinitely many frequently hypercyclic vectors will be constructed. For this reason, in order to simplify the notation, we rewrite the lemma in a more convenient form. Note that the following version amounts to an enumeration of the product $\mathbb{N} \times \mathbb{N}$.

\begin{lemma}\label{lem:positive_density_alt}
There exist pairwise disjoint subsets $A(i,j,\nu)$, $i,j,\nu \geq 1$, of $\mathbb{N}_0$ 
with positive lower density such that, for any $n \in A(i,j,\nu)$ and 
$m \in A(k,l,\mu)$, we have that $n \geq \nu$ and
\[
|n-m| \geq \nu + \mu \quad \text{if } n \neq m.
\]
\end{lemma}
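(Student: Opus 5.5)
The plan is to deduce Lemma \ref{lem:positive_density_alt} directly from Lemma \ref{lem:positive_density}, re-indexing the pair $(i,j)$ by a single index. Fix a bijection $\varphi:\mathbb{N}\times\mathbb{N}\to\mathbb{N}$, for instance the Cantor pairing $\varphi(i,j)=\frac{(i+j-1)(i+j-2)}{2}+i$. Let $A(r,\nu)$, $r,\nu\geq 1$, be the sets given by Lemma \ref{lem:positive_density}. Then define
\[
A(i,j,\nu):=A(\varphi(i,j),\nu).
\]

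Each required property now transfers from Lemma \ref{lem:positive_density}.

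\textbf{Positive lower density.} Each $A(i,j,\nu)$ is literally one of the sets of Lemma \ref{lem:positive_density}, so it has positive lower density.

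\textbf{Pairwise disjointness.} Suppose $(i,j,\nu)\neq(k,l,\mu)$. Then either $(i,j)\neq(k,l)$, in which case $\varphi(i,j)\neq\varphi(k,l)$ because $\varphi$ is injective, or $\nu\neq\mu$. In both cases $(\varphi(i,j),\nu)\neq(\varphi(k,l),\mu)$, so the two sets are disjoint.

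\textbf{Lower bound and separation.} Take $n\in A(i,j,\nu)=A(\varphi(i,j),\nu)$ and $m\in A(k,l,\mu)=A(\varphi(k,l),\mu)$. Lemma \ref{lem:positive_density} applied with first indices $\varphi(i,j)$ and $\varphi(k,l)$ gives $n\geq\nu$, and $|n-m|\geq\nu+\mu$ whenever $n\neq m$. The separation condition in Lemma \ref{lem:positive_density} depends only on the parameters $\nu,\mu$, not on the first index, so nothing is lost in the relabelling.

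There is no real obstacle: the statement is a relabelling of the classical one. The only point to check is that the classical lemma allows arbitrary $j,k$ in its separation condition, including $j=k$. It does, as stated.

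If one wished to avoid citing Lemma \ref{lem:positive_density}, the fallback would be to reproduce its standard construction (e.g.\ Grosse-Erdmann--Peris, Lemma 9.5). That construction assigns to each index, here each triple, a residue class of integers in a sufficiently sparse but positive-density arithmetic pattern, and then removes small elements. The main effort there would be verifying the gap condition $|n-m|\geq\nu+\mu$ uniformly across triples. The bijection argument above makes this unnecessary.
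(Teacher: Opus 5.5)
Your proposal is correct and matches the paper's approach. The paper offers no separate argument beyond remarking that Lemma \ref{lem:positive_density_alt} amounts to an enumeration of $\mathbb{N}\times\mathbb{N}$, which is exactly your relabelling $A(i,j,\nu)=A(\varphi(i,j),\nu)$ of the sets from Lemma \ref{lem:positive_density}; your checks of disjointness, positive lower density and the separation condition (which depends only on $\nu,\mu$) complete it.
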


\begin{theorem}[Frequent Hypercyclicity Criterion] \label{thm:CHF}
Let $T: X \rightarrow X$ be an operator on a separable Banach space $X$. Suppose that there exists a dense subset $X_{0} \subset X$ and a map
$S: X_{0} \to X_{0}$ such that, for every $x \in X_{0}$,

\begin{enumerate}[label= (\roman*)]
\item \label{CHF(1)} the series $\sum_{n=0}^{\infty} T^{n} x$ converges unconditionally;    
\item \label{CHF(2)} the series $\sum_{n=0}^{\infty} S^{n} x$ converges unconditionally;
\item \label{CHF(3)} $TS x = x$.
\end{enumerate}
Then $T$ is frequently hypercyclic.
\end{theorem}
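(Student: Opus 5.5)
The plan is to run the standard Bayart--Grivaux construction of a frequently hypercyclic vector, using Lemma \ref{lem:positive_density} for the combinatorics and unconditional convergence for the error estimates.

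\textbf{Setup.} Since $X$ is separable and $X_0$ is dense, fix a sequence $(y_j)_{j\ge1}\subset X_0$ that is dense in $X$. Unconditional convergence in \ref{CHF(1)} and \ref{CHF(2)} gives the following Cauchy-type control: for every $j$ and every $\varepsilon>0$ there is $N$ such that
\[
\Big\|\sum_{n\in F}T^ny_j\Big\|<\varepsilon \quad\text{and}\quad \Big\|\sum_{n\in F}S^ny_j\Big\|<\varepsilon
\]
for every finite $F\subset[N,\infty)$. Using this, choose $N_\nu$ increasing such that these tail sums are below $2^{-\nu}/\nu$ for $j\le\nu$, uniformly over finite sets $F\subset[N_\nu,\infty)$. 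Apply Lemma \ref{lem:positive_density} with the parameter $\nu$ replaced by $N_\nu$. This is legitimate because the lemma's proof only needs some increasing sequence of separations; alternatively, re-index by passing to suitable subfamilies. The result is pairwise disjoint sets $A(j,\nu)$ of positive lower density with $n\ge N_\nu$ and $|n-m|\ge N_\nu+N_\mu$ for distinct $n\in A(j,\nu)$, $m\in A(k,\mu)$.

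\textbf{Definition of $x$.} Set
\[
x=\sum_{j,\nu}\ \sum_{n\in A(j,\nu)} S^n y_j .
\]
Convergence follows from unconditional convergence of $\sum_n S^n y_j$, combined with the fact that $A(j,\nu)\subset[N_\nu,\infty)$. These bounds make the double series summable in $\nu$, and for fixed $\nu$ only $j\le\nu$ matter, provided we take only $j\le\nu$, i.e.\ set $A(j,\nu)=\emptyset$ unless $j\le\nu$. Alternatively, sum over $\nu$ with the index set restricted to $j\le \nu$.

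\textbf{Main estimate.} Fix $n\in A(j,\nu)$. Using $TS=I$ on $X_0$ we have $T^mS^{m'}y=S^{m'-m}y$ if $m'>m$ and $T^mS^{m}y=y$. Hence
\[
T^n x - y_j=\sum_{(k,\mu,m)\neq(j,\nu,n),\ m>n} S^{m-n}y_k+\sum_{m<n}T^{n-m}S^{0}\cdots
\]
More precisely, $T^{n}S^{m}y_k=T^{n-m}y_k$ when $m<n$. The separation gives $|m-n|\ge N_\nu+N_\mu\ge N_\mu$. For each fixed $(k,\mu)$, both sums are therefore tails of unconditionally convergent series beyond $N_\mu$, bounded by $2^{-\mu}/\mu$ each. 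Summing over $k\le\mu$ and all $\mu$ gives $\|T^nx-y_j\|\le 2\sum_\mu 2^{-\mu}$. This is too crude, so I would refine it by also using $|m-n|\ge N_\nu$: choose $N_\nu$ so that the tails beyond $N_\nu$ are below $2^{-\nu-\mu}/\mu$ for all $k\le\mu\le$ any index. That uniformity cannot hold for all $\mu$ at once, so split the sum into two ranges. For $\mu\le\nu$, use the $N_\nu$-tails, which are uniform over the finitely many $y_k$ with $k\le\nu$. For $\mu>\nu$, use the $N_\mu$-tails, which are small by $2^{-\mu}$. This yields $\|T^nx-y_j\|\le C\,2^{-\nu}$, so $\|T^nx-y_j\|\to 0$ as $\nu\to\infty$ uniformly in $n\in A(j,\nu)$.

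\textbf{Conclusion.} Given a nonempty open set $U$, pick $j$ and $\nu$ with $B(y_j,C2^{-\nu})\subset U$. This is possible by density of $(y_j)$, after noting that each $y_j$ should be repeated infinitely often, or more simply that each $y_j$ appears with all $\nu\ge j$. Then $T^nx\in U$ for all $n\in A(j,\nu)$, a set of positive lower density, so $x\in FHC(T)$.

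\textbf{Main obstacle.} The delicate step is bookkeeping the double error sum. The separations $N_\nu+N_\mu$ must be chosen so that the contributions of all other blocks are controlled uniformly. This is exactly where the unconditional, rather than merely ordinary, convergence is used, because the index sets $\{m-n\}$ are arbitrary finite or infinite subsets of a tail.
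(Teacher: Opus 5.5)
Your proposal is correct and is essentially the standard Bayart--Grivaux/Bonilla--Grosse-Erdmann construction, which the paper takes for granted (the criterion is only stated, not proved) and reproduces almost verbatim in Sections~\ref{Sec:Proof_ext_f(T)_Freq} and~\ref{Sec:Proof_ext_CHF}. Your only variation is to attach each $y_j$ to all blocks $A(j,N_\nu)$, $\nu\ge j$, instead of the single block $A(j,N_j)$, which spares you the remark that a dense sequence meets every open set at arbitrarily large indices; one small correction is that the $\nu(\nu+1)/2$ blocks with $\mu\le\nu$ each contribute up to $2\cdot 2^{-\nu}/\nu$, so your bound is really $\|T^nx-y_j\|\le(\nu+3)2^{-\nu}$ rather than $C2^{-\nu}$, which still tends to $0$ and leaves the conclusion intact.
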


\begin{definition}
Let $X$ be a Banach space. A sequence $(x_{n})$ in $X$ is said to be $\ell_{\infty}$-independent if for every sequence $(\lambda_{n}) \in \ell_{\infty}$ such that $\sum_{n=1}^{\infty} \lambda_{n} x_{n} = 0$, we have $(\lambda_{n}) = 0$.
\end{definition}

The next result, although already known in the literature we give a proof to make it accessible and also because it will be particularly important for us. 

\begin{theorem} \label{thm:ell_infty-independence}
Let $X$ be a Banach space and let $(x_{n})$ be a linearly independent sequence in $X$. Then there exist scalars $0 < \gamma_{n} \leq 1$ such that $\{\gamma_{n} x_{n} \mid n \in \mathbb{N} \}$ is $\ell_{\infty}$-independent. 
\end{theorem}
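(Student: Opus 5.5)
We will construct the scalars $\gamma_n$ inductively, choosing them so small that the tail $\sum_{k>n}\gamma_k x_k$ can never cancel the "new" component of $x_n$. Since $\sum_n \lambda_n\gamma_n x_n$ has to make sense for every bounded $(\lambda_n)$, we also impose absolute convergence, say $\gamma_n\|x_n\|\le 2^{-n}$.

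The key tool is a distance functional. Set $E_n=\mathrm{span}\{x_1,\dots,x_n\}$. By linear independence, $x_n\notin E_{n-1}$, and since $E_{n-1}$ is finite-dimensional and hence closed,
\[
d_n:=\mathrm{dist}(x_n,E_{n-1})>0 .
\]
By Hahn--Banach there is a functional $\varphi_n\in X^*$ with $\varphi_n|_{E_{n-1}}=0$, $\varphi_n(x_n)=1$ and $\|\varphi_n\|=1/d_n$. We then choose $\gamma_n\in(0,1]$ inductively so that for every $k>n$
\[
\gamma_k\|x_k\|\,\|\varphi_j\|\le \gamma_j\,2^{-(k-j)-1}\quad\text{for all } j<k, \qquad \gamma_k\|x_k\|\le 2^{-k}.
\]
This is possible because at stage $k$ there are only finitely many constraints, each requiring $\gamma_k$ to be below some positive number.

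To verify $\ell_\infty$-independence, suppose $(\lambda_n)\in\ell_\infty$, not identically zero, with $\sum_n\lambda_n\gamma_nx_n=0$. The series converges absolutely. The naive argument picks the first index $j$ with $\lambda_j\ne0$ and applies $\varphi_j$, which gives
\[
0=\lambda_j\gamma_j+\sum_{k>j}\lambda_k\gamma_k\varphi_j(x_k),\qquad \Big|\sum_{k>j}\lambda_k\gamma_k\varphi_j(x_k)\Big|\le \|\lambda\|_\infty\gamma_j/2 .
\]
This yields only $|\lambda_j|\le\|\lambda\|_\infty/2$, not a contradiction, because $|\lambda_j|$ may be much smaller than $\|\lambda\|_\infty$. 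This is the main obstacle. It arises because the perturbation bound is proportional to $\sup|\lambda|$, whereas the first nonzero coefficient can be tiny.

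The fix is to let $\gamma_k$ also beat the coefficients' possible growth through a normalization argument. Since $\|\lambda\|_\infty<\infty$ and the relation is linear, we rescale so that $\|\lambda\|_\infty=1$. We then pick an index $j$ with $|\lambda_j|>1/2$ (wait — it need not be the first nonzero). Applying $\varphi_j$ now also involves the terms with $k<j$, and $\varphi_j$ kills exactly those, since $x_k\in E_{j-1}$ for $k<j$. So
\[
|\lambda_j|\gamma_j\le\sum_{k>j}|\lambda_k|\gamma_k|\varphi_j(x_k)|\le \gamma_j/2 ,
\]
which gives $|\lambda_j|\le 1/2$. To make this strict and reach a contradiction, we choose $j$ with $|\lambda_j|>1/2$ and strengthen the constraint from $2^{-(k-j)-1}$ to $2^{-(k-j)-2}$, so the right-hand side is at most $\gamma_j/4$. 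Then $|\lambda_j|\le1/4<1/2$, a contradiction. Hence $\lambda=0$.

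The essential points are therefore that $\varphi_j$ annihilates all earlier vectors, and that the smallness conditions on later $\gamma_k$ are taken relative to $\gamma_j$ and $\|\varphi_j\|$ for every earlier $j$. The remaining steps (absolute convergence, continuity of $\varphi_j$ to pass it through the series) are routine.
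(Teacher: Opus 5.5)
Your proof is correct. Its skeleton matches the paper's:
\begin{itemize}
\item normalize so that $\|\lambda\|_\infty=1$;
\item pick an index $j$ with $|\lambda_j|$ at least $1/2$;
\item choose the later $\gamma_k$ so small that the tail $\sum_{k>j}\lambda_k\gamma_k x_k$ cannot cancel the head $\sum_{k\le j}\lambda_k\gamma_k x_k$.
\end{itemize}

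You differ in how you bound the head from below. The paper uses compactness. The set $K_j$ of combinations $\sum_{i\le j}\lambda_i\gamma_i x_i$ with $|\lambda_i|\le 1$ and $|\lambda_j|\ge 1/2$ is compact and avoids $0$, which gives a non-explicit $\delta_j>0$. The chain condition $\gamma_n\|x_n\|\le\frac12\min\{\delta_{n-1},\gamma_{n-1}\|x_{n-1}\|\}$ then forces every tail after $j$ to have norm at most $\delta_j$.

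You instead apply the annihilating functional $\varphi_j$. It removes the terms with $k<j$ exactly and reduces everything to a scalar inequality. This makes the constant explicit: your argument amounts to $\bigl\|\sum_{k\le j}\lambda_k\gamma_k x_k\bigr\|\ge |\lambda_j|\gamma_j d_j$. So any $\delta_j<\frac12\gamma_j d_j$ would serve in the paper's argument, and the two constructions are quantitatively the same. That inequality follows from the definition of $d_j$ alone, so Hahn--Banach is a convenience rather than a necessity. Imposing smallness against every earlier $j$, rather than through a chain, is an equivalent bookkeeping choice.

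Two minor points. First, strengthening $2^{-(k-j)-1}$ to $2^{-(k-j)-2}$ is unnecessary. Since you chose $|\lambda_j|>1/2$, the bound $|\lambda_j|\le 1/2$ is already a contradiction. Strictness only matters if one picks $|\lambda_j|\ge 1/2$, as the paper does, which is why it needs $\|v\|>\delta_{n_0}$. Second, your extra condition $\gamma_k\|x_k\|\le 2^{-k}$ makes the series converge for every bounded $(\lambda_n)$. In the paper this comes for free from the geometric decay $\gamma_n\|x_n\|\le\frac12\gamma_{n-1}\|x_{n-1}\|$ built into the definition of $\gamma_n$.
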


\begin{proof}
We construct these scalars inductively. Set $\gamma_{1} = 1$ and suppose that $\gamma_{1}, \dots, \gamma_{n-1}$ have been defined and are all nonzero. Now, consider the set 
\[
K_{n-1} = \left\{ \sum_{i=1}^{n-1} \lambda_{i} \gamma_{i} x_{i}  : |\lambda_{i}| \leq 1 \ \text{for} \ i = 1,\dots,n-1 \ \text{and} \ |\lambda_{n-1}| \geq \frac{1}{2} \right\}.
\]
Since $\{x_{n} \mid n \in \mathbb{N} \}$ is linearly independent and $\lambda_{n-1} \neq 0$, it follows that $0 \notin K_{n-1}$. Moreover, $K_{n-1}$ is compact. Hence, there exists $\delta_{n-1} > 0$ such that $\| v \| > \delta_{n-1}$ for every $v \in K_{n-1}$. We then define 
\[
\gamma_{n} = \frac{1}{2\|x_{n}\|} \min \left\{ \delta_{n-1} , \gamma_{n-1} \| x_{n-1} \| , 2 \| x_{n} \| \right\}.
\]

Suppose, for contradiction, that 
\[
\sum_{n=1}^{\infty} \lambda_{n} \gamma_{n} x_{n} = 0
\]
for some sequence $(\lambda_{n}) \in \ell_{\infty} \setminus \{0\}$. Without loss of generality, we may assume that $\| (\lambda_{n}) \|_{\infty} = 1$. Then there exists $n_{0} \in \mathbb{N}$ such that $|\lambda_{n_{0}}| \geq \frac{1}{2}$. It follows that 
\[
\left\| \sum_{n=1}^{n_{0}} \lambda_{n} \gamma_{n} x_{n} \right\| = 
\left\| \sum_{n=n_{0}+1}^{\infty} (-\lambda_{n}) \gamma_{n} x_{n} \right\| 
\leq \sum_{n=n_{0}+1}^{\infty} |\gamma_{n}| \, \| x_{n} \| 
\leq \sum_{n=1}^{\infty} \frac{\delta_{n_{0}}}{2^{n}} 
= \delta_{n_{0}},
\]
which contradicts the choice of $\delta_{n_{0}}$.
\end{proof}

\begin{theorem}[Mycielski Theorem]\label{thm:mycielski}
Suppose that $X$ is a separable complete metric space without isolated points, and that for every $n \in \mathbb{N}$, the set $\mathcal{R}_n$ is residual in the product space $X^n$. Then there is a Mycielski set $\mathcal{K}$ in $X$ such that
\[
(y_1, y_2, \cdots, y_n) \in \mathcal{R}_n
\]
for each $n \in \mathbb{N}$ and any pairwise different $n$ points $y_1, y_2, \ldots, y_n$ in $\mathcal{K}$.

A set $\mathcal{K}$ is referred to as a Mycielski set if the intersection of $\mathcal{K}$ and any nonempty open set $U$ contains a Cantor set.
\end{theorem}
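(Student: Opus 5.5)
The statement is the Mycielski Theorem (Theorem \ref{thm:mycielski}), a classical result of descriptive set theory. My plan is a Cantor-scheme construction. First, for each $n$ I replace the residual set $\mathcal{R}_n$ by a countable intersection $\bigcap_{m} G_{n,m}$ of dense open sets $G_{n,m}\subset X^n$, which I may take decreasing in $m$. Since the condition only concerns pairwise distinct points, I also enlarge each $G_{n,m}$ to be invariant under coordinate permutations by intersecting with its permuted copies. The result is still dense and open, and only finitely many permutations are involved. Next I fix a countable base $(U_j)$ of $X$. It suffices to build, inside each $U_j$, a Cantor set $C_j$ such that every tuple of pairwise distinct points of $\mathcal{K}=\bigcup_j C_j$ lies in $\mathcal{R}_n$. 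To handle all the $C_j$ simultaneously, I interleave the constructions, so that at stage $m$ finitely many Cantor schemes are each at some finite level.

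The inductive construction runs as follows. At stage $m$ I maintain a finite family of nonempty open sets $V_s$, indexed by finite binary words $s$ across the active schemes. These sets have pairwise disjoint closures, diameters below $2^{-m}$, and $\overline{V_{s0}},\overline{V_{s1}}\subset V_s$. The key requirement is that for every $n\le m$ and every choice of $n$ distinct current-level sets $V_{s_1},\dots,V_{s_n}$, the product satisfies $V_{s_1}\times\cdots\times V_{s_n}\subset G_{n,m}$. To pass to the next stage, I first split each $V_s$ into two disjoint open subsets. This is possible because $X$ has no isolated points, so each $V_s$ contains two distinct points that can be separated. I then shrink these sets finitely many times, handling one tuple of distinct sets at a time. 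Since $G_{n,m+1}$ is dense and open, the product $V_{s_1}\times\cdots\times V_{s_n}$ contains a nonempty open box $W_1\times\cdots\times W_n$ inside $G_{n,m+1}$. I replace each $V_{s_i}$ by $W_i$, and later shrinkings only preserve the inclusion. Finitely many tuples means finitely many steps. At some stages I also start a new scheme inside the next $U_j$; its root set must be shrunk in the same way together with all the existing sets.

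Finally, completeness of $X$ together with the shrinking diameters and nested closures shows that each branch determines a point, and that $C_j$ is homeomorphic to $2^{\mathbb N}$, hence a Cantor set inside $U_j$. Now take pairwise distinct points $y_1,\dots,y_n\in\mathcal{K}$. They follow different branches, so from some stage on they lie in distinct current-level sets. For every large $m\ge n$ this places $(y_1,\dots,y_n)$ in the closure of a product contained in $G_{n,m}$. The main obstacle is that points only lie in closures, whereas the $G_{n,m}$ are open. I would fix this by requiring $\overline{V_{s_1}}\times\cdots\times\overline{V_{s_n}}\subset G_{n,m}$, which is achieved by shrinking to open boxes whose closures sit inside $G_{n,m}$ (a regularity argument in the metric space). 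Then $(y_1,\dots,y_n)\in G_{n,m}$ for all large $m$. Because the $G_{n,m}$ decrease in $m$, it follows that $(y_1,\dots,y_n)\in\bigcap_m G_{n,m}\subset\mathcal{R}_n$, which proves the theorem.
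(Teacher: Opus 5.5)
The paper does not prove this statement. It is quoted in the preliminaries as a classical result and used only as a black box in the proof of Theorem~\ref{thm:G_delta_kitai}(ii), so there is no argument of the authors to compare yours with. Your Cantor-scheme construction is the standard proof of Mycielski's theorem, and it is correct. It uses decreasing dense open sets $G_{n,m}$, a countable base $(U_j)$, interleaved schemes with shrinking diameters and nested closures, and finitely many box-shrinkings per stage.

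A few small remarks.

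\begin{itemize}
\item The ``main obstacle'' you identify is not actually there. You already impose $\overline{V_{s0}},\overline{V_{s1}}\subset V_s$, so the point $y$ determined by a branch $\sigma$ satisfies $y\in\overline{V_{\sigma|(k+1)}}\subset V_{\sigma|k}$ for every $k$. Hence $y$ lies in the open sets themselves, and $V_{s_1}\times\cdots\times V_{s_n}\subset G_{n,m}$ already suffices. Your strengthening to closures is harmless but unnecessary.

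\item Keeping all closures pairwise disjoint when you start a new root inside $U_j$ needs one more line, because $U_j$ could be contained in some $\overline{V_s}$. If $U_j$ meets some $\overline{V_s}$, it meets $V_s$ since $U_j$ is open; pick $p\in U_j\cap V_s$. Split $V_s$ using two other points of $V_s$, which exist because $V_s$ is infinite ($X$ has no isolated points), so that $\overline{V_{s0}}\cup\overline{V_{s1}}$ misses $p$. Since $p\notin\overline{V_t}$ for $t\neq s$, a small ball around $p$ can serve as the new root. If $U_j$ misses all the closures, any small ball in $U_j$ works.

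\item Intersecting $G_{n,m}$ with its permuted copies shrinks it rather than enlarging it, though it stays dense and open. This step is unnecessary anyway if you impose the product condition on ordered tuples of distinct current-level sets.
\end{itemize}
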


\section{Theorem \ref{thm:ext_f(T)}} \label{Sec:Proof_ext_f(T)}

\begin{proof}[\textit{Proof of Theorem \ref{thm:ext_f(T)}}]
Let $x \in HC(T)$, our goal is to construct a point $\xi \in HC(T)$, satisfying \[
\{ f(T)x \mid f \in H(\mathbb{C}) \} 
\subset \{ f(T)x \mid f \in H(\mathbb{C}) \} \oplus \{ g(T)\xi \mid g \in H(\mathbb{C}) \} 
\subset HC(T) \cup \{0\}
\]
and such that 
\[
\{ f(T)x \mid f \in H(\mathbb{C}) \} \cap \{ g(T)\xi \mid g \in H(\mathbb{C}) \} = \{0\}.
\]
Once $\xi$ is found the theorem is proved since $\{ g(T)\xi \mid g \in H(\mathbb{C}) \}$ has dimension $\mathfrak c$. 

Let us now construct such $\xi \in HC(T)$.
We want $\xi$ to visit all opens sets and while $\xi$ visits these sets we want $x$  to be visiting a neighborhood of zero.

Let $(x_{i}) \subset X_{0}$ be a dense sequence in $X$. And consider a sequence $(n_{k})$ such that $\| T^{n_{k}}x \| \leq 2^{-k}$ and define $k_{1} = 1$. Suppose now that $k_{1}, \dots , k_{i-1}$ are defined. Because we have a finite amount of variables to control we may choose $k_{i} > k_{i-1}$ satisfying: 

\begin{enumerate}[label=($\Lambda$\arabic*)]
\item $\| S^{m}x_{i} \| < 2^{-i}$ for all $m \geq n_{k_i} - n_{k_{i-1}}$;
\item $\| T^{m}x_{j} \| < 2^{-(i+j)}$ for all $m \geq n_{k_i} - n_{k_{i-1}}$ and $j \leq i$;
\end{enumerate}

In this case, by ($\Lambda$1), $\xi = \sum_{i=1}^{\infty} S^{n_{k_{i}}}x_{i}$ converges absolutely and satisfies $\| \xi \| \leq 1$. Let $f, g \in H(\mathbb C)$, since $f$ and $g$ can be written as a power series $g(T)$ and $f(T)$ commutes with $T^n$ for all $n \in \mathbb N$. 
Moreover,
\begin{align*}
& \left\| T^{n_{k_{j}}} [f(T)x + g(T)\xi] - g(T) x_{j} \right\| = \left\| T^{n_{k_{j}}} [f(T)x + g(T)(\sum_{i=1}^{\infty} S^{n_{k_{i}}}x_{i})] 
- g(T) x_{j} \right\|  \\ 
&\leq \|f(T)\| \cdot \| T^{n_{k_{j}}}x \| 
+ \| g(T) \| \sum_{i<j} \left\| T^{n_{k_{j}}-n_{k_{i}}} x_{i} \right\|  
+ \| g(T) \| \sum_{i>j} \left\| S^{n_{k_{i}}-n_{k_{j}}} x_{i} \right\| \\
&< \|f(T)\| \cdot 2^{-k_{j}} 
+ \| g(T) \| \sum_{i<j}2^{-(i+j)}  
+ \| g(T) \| \sum_{i>j} 2^{-i} \\
&< \|f(T)\| \cdot 2^{-k_{j}} + \| g(T) \| 2^{-j+1}.
\end{align*}

Now, if $g \neq 0$, then it is known that $g(T)$ has dense range (see \cite[Theorem 4.3.1]{aron2015lineability}). Thus, given a non-empty open set $U \subset X$, the set $g(T)^{-1}(U)$ is a non-empty open subset of $X$. Hence, there exists $B(y,\varepsilon) \subset g(T)^{-1}(U)$ and $x_{p} \in B(y,\frac{\varepsilon}{2})$, with $p>m$ and $m$ such that
\[
\| g(T) \| 2^{-m+1} + \|f(T)\| \cdot 2^{-k_{m}} < \frac{\varepsilon}{2}.
\]
Therefore, $T^{n_{k_{p}}} [f(T)x + g(T)\xi] \in U$ and $f(T)x + g(T)\xi$ is hypercyclic whenever $g \neq 0$.
Consequently, we also obtain that
\[
\{ f(T)x \mid f \in H(\mathbb{C}) \} \cap \{ g(T)\xi \mid g \in H(\mathbb{C}) \} = \{0\},
\]
otherwise there would exist $f,g \in H(\mathbb{C}) \setminus \{0\}$ such that $f(T)x+g(T)\xi = 0$, which is impossible since we showed that $f(T)x+g(T)\xi$ is hypercyclic whenever $g\neq 0$. Therefore,
\[
\{ f(T)x \mid f \in H(\mathbb{C}) \} 
\subset \{ f(T)x \mid f \in H(\mathbb{C}) \} \oplus \{ g(T)\xi \mid g \in H(\mathbb{C}) \} 
\subset HC(T) \cup \{0\}.
\]

\end{proof}

From the proof above and using induction we obtain the following result.

\begin{corollary}\label{cor:sum_entire_function}
For any operator $T$ satisfying Kitai's Criterion on a separable Banach space $X$, $n \in \mathbb N$ and $x \in HC(T)$, the set $\{ f(T)x \mid f \in H(\mathbb{C}) \}$, then there exist $\xi_1, \xi_2, \ldots, \xi_n$ such that 
$$\{ f(T)x \mid f \in H(\mathbb{C})\} \oplus \left( \bigoplus_{i=1}^{n} \{ f(T)\xi_i \mid f \in H(\mathbb{C})\} \right) \subset HC(T) \cup \{0\}.$$
\end{corollary}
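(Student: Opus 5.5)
The plan is to prove Corollary \ref{cor:sum_entire_function} by induction on $n$, constructing at step $i$ a single vector $\xi_i$ that is built exactly as $\xi$ in the proof of Theorem \ref{thm:ext_f(T)}, with the finite family $x,\xi_1,\dots,\xi_{i-1}$ playing the role of $x$. The key point is to choose the sequence along which all previously constructed vectors are small simultaneously. To make this available at every stage, I strengthen the induction hypothesis to the following statement $(H_i)$: there is an increasing sequence $(n^{(i)}_k)$ such that
\[
\|T^{n^{(i)}_k}x\|+\sum_{l\le i}\|T^{n^{(i)}_k}\xi_l\|\le 2^{-k},
\]
and the sum $\{f(T)x\}\oplus\bigoplus_{l\le i}\{f(T)\xi_l\}$ is direct and contained in $HC(T)\cup\{0\}$.

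For the base case $i=0$, I need some $(n_k)$ with $\|T^{n_k}x\|\le 2^{-k}$. This holds because $x\in HC(T)$ and $0$ lies in the closure of the orbit of $x$. This is the same sequence the proof of Theorem \ref{thm:ext_f(T)} uses.

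For the inductive step I define $\xi_{i+1}=\sum_j S^{m_j}x_j$ with $m_j=n^{(i)}_{k_j}$, where $k_j$ is chosen inductively so that ($\Lambda$1) and ($\Lambda$2) hold. I also add a third requirement on $k_j$:
\[
\sum_{r>j}\|S^{m_r-m_j}x_r\|\le 2^{-j} \quad\text{and}\quad \sum_{r<j}\|T^{m_j-m_r}x_r\|\le 2^{-j}.
\]
With these choices, the estimate in the proof of Theorem \ref{thm:ext_f(T)} carries over with $f(T)x$ replaced by $f(T)x+\sum_{l\le i}f_l(T)\xi_l$. Since
\[
\|T^{m_j}(f(T)x+\textstyle\sum_l f_l(T)\xi_l)\|\le \max(\|f(T)\|,\|f_l(T)\|)\,2^{-k_j}\to0,
\]
we obtain
\[
\Big\|T^{m_j}\Big[f(T)x+\sum_{l\le i}f_l(T)\xi_l+g(T)\xi_{i+1}\Big]-g(T)x_j\Big\|\to0 .
\]
Whenever $g\neq0$, the density of the range of $g(T)$ (\cite[Theorem 4.3.1]{aron2015lineability}) together with the density of $(x_j)$ gives hypercyclicity, exactly as before. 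The case $g=0$ is covered by the inductive hypothesis. Hence every nonzero element of the enlarged sum is hypercyclic, and in particular nonzero. This gives directness: a relation $f(T)x+\sum_l f_l(T)\xi_l+g(T)\xi_{i+1}=0$ with $g\neq0$ is impossible, so $g=0$, and the inductive hypothesis then forces the remaining coefficients to vanish.

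The main obstacle is re-establishing the smallness part of $(H_{i+1})$, namely a subsequence along which $\xi_{i+1}$ is also small. I would take the subsequence $(m_j)$ itself and use $x_j$ with $x_j$ small. Concretely, $(x_j)$ can be chosen dense with infinitely many terms equal to $0\in X_0$; I restrict to indices $j$ with $x_j=0$. Along these indices $\|T^{m_j}\xi_{i+1}\|\le 2^{-j+1}$ by the third requirement, while $x$ and $\xi_l$ for $l\le i$ are already small there, since $m_j$ is a subsequence of $(n^{(i)}_k)$. Relabelling and passing to a further subsequence gives $(n^{(i+1)}_k)$ with the $2^{-k}$ bound. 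After $n$ steps this yields $\xi_1,\dots,\xi_n$ as claimed.
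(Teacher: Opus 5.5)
Your proposal is correct and is essentially the paper's argument: the paper obtains the corollary by iterating the construction of $\xi$ from the proof of Theorem \ref{thm:ext_f(T)}, which is exactly your induction. Your strengthened hypothesis $(H_i)$ (a single sequence along which $x,\xi_1,\dots,\xi_i$ are simultaneously small, recovered at each step from the indices with $x_j=0$, or just $\|x_j\|\to 0$, which avoids needing $0\in X_0$) makes explicit the one point that the paper's ``using induction'' leaves unstated.
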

Note that in this corollary the sum is a direct sum, that is, the sets intersect each other only at the origin.
	
\section{Theorems \ref{thm:G_delta_kitai} and \ref{thm:ext_Kitai}} \label{Sec:Proof_ext_Kitai}

We now proceed to the proofs of the remaining A's theorems.

\subsection{Proof of Theorem \ref{thm:G_delta_kitai}}\label{Subsec:Proof_G_delta}

\begin{proof}[Proof of Theorem \ref{thm:G_delta_kitai}] 

Since $T$ satisfies Kitai's criterion, it is known that $\bigoplus_{i=1}^{n} T$ is hypercyclic for all $n \in \mathbb N$, hence the set $HC(\bigoplus_{i=1}^{n} T) \subset X^n$ is a $G_\delta$ dense set. Therefore, the set $\mathcal{R}_n \subset X^n$ from item (i) can be taken to be $HC(\bigoplus_{i=1}^{n} T)$. And the set $\mathcal R_{\mathbb N}$ from item (ii) comes from applying Mycielski Theorem \ref{thm:mycielski}.

To conclude the proof we need only to check that if $(y_{1}, \dots , y_{n}) \in HC(\bigoplus_{i=1}^{n} T)$, then $\text{span}(\{ y_{1}, \dots , y_{n} \}) \subset HC(T) \cup \{ 0 \}$ and $(y_{i})$ can approximate $0$ uniformly. To see this, consider a linear combination $\lambda_{1}y_{1} + \dots + \lambda_{n} y_{n}$ and, without loss of generality, assume that $\lambda_{i} \neq 0$ for every $0 < i \leq n$. Then, given a non-empty open set $U$, $x \in U$ and $\varepsilon >0$ such that $B(x,\varepsilon) \subset U$, there exists a $k \in \mathbb{N}$ such that 
\[
(T^{k}y_{1}, \dots , T^{k}y_{n}) \in B\left( 0, \frac{\varepsilon}{2|\lambda_{1}|(n-1)} \right) \times \dots B \left( 0, \frac{\varepsilon}{2|\lambda_{n}|(n-1)} \right) \times B\left( \frac{x}{\lambda_{n}}, \frac{\varepsilon}{2|\lambda_{n}|} \right).
\]
So, 
\begin{align*}
\|T^{k}(\lambda_{1}y_{1} + \dots + \lambda_{n} y_{n}) - x\| 
&\leq |\lambda_{1}|\|T^{k}y_{1}\| + \dots + |\lambda_{n-1}|\|T^{k}y_{n-1}\| + |\lambda_{n}| \left\| T^{k}y_{n} - \frac{x}{\lambda_{n}} \right\| \\
&<|\lambda_{1}| \frac{\varepsilon}{2|\lambda_{1}|(n-1)} + \dots + |\lambda_{n-1}|\frac{\varepsilon}{2|\lambda_{n-1}|(n-1)} + |\lambda_{n}|\frac{\varepsilon}{2|\lambda_{n}|} \\
&< \varepsilon.
\end{align*}

Therefore, $T^{k}(\lambda_{1}y_{1} + \dots + \lambda_{n} y_{n}) \in B(x,\varepsilon) \subset U$ and we concluded that $\lambda_{1}y_{1} + \dots + \lambda_{n} y_{n} \in HC(T)$. Additionally, this also proves that $y_{1}, \dots , y_{n}$ are linearly independent. 
\end{proof}

\subsection{Proof of Theorem \ref{thm:ext_Kitai}}
\begin{proof}[Proof of Theorem \ref{thm:ext_Kitai}] 
Since $X$ is separable, we can enumerate a dense sequence of vectors $(x_{j})_{j} \subset X_{0}$. Furthermore, relabeling if necessary, we may assume that $(n_{k})$ is a subsequence such that, if $I_{k} = \{1 , \dots , k\}$, 

\[
T^{n_k} y_i \in B(0, 2^{-k}), \: \forall i \in \alpha \cap I_{k}.
\]
We aim to inductively construct a map $a: \mathbb{N} \times \mathbb{N} \rightarrow \{n_k\}_{k \in \mathbb{N}}$ and hypercyclic vectors $\xi_i = \sum_{l=1}^{\infty} S^{a(i,l)} x_l$, in such a way that the orbit of each $\xi_i$ approximate $x_l$ at times when the orbit of each $y_k$ is near the origin. For this, consider a bijection $\pi: \mathbb{N} \rightarrow \mathbb{N} \times \mathbb{N}$ of the form $\pi = (\pi_1, \pi_2)$ and define $a(\pi(1)) = n_1$.

Now, suppose that $a(\pi(1)), \dots, a(\pi(n-1))$ have been defined. We choose $a(\pi(n))$ satisfying:
\begin{enumerate}[label=($\Lambda$\arabic*)]
    \item $a(\pi(n-1)) < a(\pi(n))$;
    \item $\| S^{a(\pi(n))} x_{\pi_2(n)} \| < 2^{-(\pi_1(n) + \pi_2(n))}$;
    \item $\| T^{a(\pi(n)) - a(\pi(m))} x_{\pi_2(m)} \| < 2^{-(\pi_1(n) + \pi_1(m) + \pi_2(n) + \pi_2(m))}$ for all $m < n$;
    \item $\| S^{a(\pi(n)) - a(\pi(m))} x_{\pi_2(n)} \| < 2^{-(\pi_1(n) + \pi_1(m) + \pi_2(n) + \pi_2(m))}$ for all $m < n$.
\end{enumerate}

As required, $\xi_{i} = \sum_{l=1}^{\infty} S^{a(i,l)} x_{l}$ converges thanks to ($\Lambda$2) and $\|\xi_{i}\| < 2^{-i}$.
With a slight abuse of notation, we write $S^{-n} = T^n$, bearing in mind that $S$ is only a right inverse of $T$. Now, we can show that the orbit of $\xi_{i}$ approximate $x_{j}$ in the moment $a(i,j)$.
\begin{align*}
\left\| T^{a(i,j)} \xi_i - x_j \right\| 
&= \left\| \sum_{l \neq j} S^{a(i,l)-a(i,j)} x_l \right\| \\
&\leq \sum_{l \neq j} \left\| S^{a(i,l)-a(i,j)} x_l \right\| \\
&< \sum_{l\neq j} 2^{-(2i+j+l)} \quad \text{(by $\Lambda$3 and $\Lambda$4)} \\
&< 2^{-(2i+j)}.
\end{align*}

In addition, $\xi_{i}$ is close to $0$, whenever $\xi_{i'}$ is close to any $x_{j}$. More precisely, if $i' \neq i$ and $j \in \mathbb{N}$, then:
\begin{align*}
\left\| T^{a(i',j)} \xi_{i} \right\| 
&= \left\| \sum_{l=1}^{\infty} S^{a(i,l)-a(i',j)} x_l \right\| \\
&\leq \sum_{l=1}^{\infty} \left\| S^{a(i,l)-a(i',j)} x_l \right\| \\
&< \sum_{l=1}^{\infty} 2^{-(i+i'+j+l)} \\
&= 2^{-(i'+i+j)}.
\end{align*}

Finally, we can show that $[\{y_i \mid i \in \alpha \}] \oplus [ \xi_{i} \mid i \in \mathbb{N}] \subset HC(T) \cup \{0\}$. For that, consider a vector $x \in [\{y_i \mid i \in \alpha \}] \oplus [ \xi_{i} \mid i \in \mathbb{N}]$. If $x \in [\{y_i \mid i \in \alpha \}]$, by hypothesis, we have that $x \in HC(T) \cup \{0\}$. Therefore, we assume that $x = \beta_{1} y_{1} + \dots + \beta_{s} y_{s} + \lambda_{1} \xi_{1} + \dots + \lambda_{r} \xi_{r}$ such that $\lambda_{r} \neq 0$ and $\sup\{|\beta_{1}|, \dots , |\beta_{s}| , |\lambda_1|, \dots, |\lambda_r|\} = M$.

\begin{align*}
\left\| T^{a(r,j)}x - \lambda_r x_j \right\| 
&\leq \sum_{k=1}^{s} \left\| T^{a(r,j)} \beta_{k} y_{k} \right\| + \sum_{k=1}^{r-1} \left\| T^{a(r,j)} \lambda_k \xi_k \right\| + \left\| T^{a(r,j)} \lambda_r \xi_r - \lambda_r x_j \right\| \\
&<  \sum_{k=1}^{s} 2^{-\pi^{-1}(r,j)} |\beta_{k}| + \sum_{k=1}^{r-1} |\lambda_k| 2^{-(r+j+k)} + |\lambda_r| 2^{-(2r+j)} \\
&< M \left( s 2^{-\pi^{-1}(r,j)}+ 2^{-(r+j)} \right).
\end{align*}

In particular, that also proves that the set $\{y_{i} \mid i \in \alpha \} \cup \{ \xi_{i} \mid i \in \mathbb{N}\}$ is linearly independent, provided that $\{y_i \mid i \in \alpha \}$ is linearly independent.  
So, by Theorem \ref{thm:ell_infty-independence}, there are scalars $0 < \gamma_{i} \leq 1$ such that the operator 
\begin{align*}
A: \ell^\infty &\rightarrow X \\
(\lambda_i)_{i \in \mathbb{N}} &\mapsto \sum_{i \in \mathbb{N}} \lambda_i\gamma_{i} \xi_i,
\end{align*}
is an injection. It is easy to verify, using the triangle inequality, that $A$ is well defined and $\|A\| \leq 1$.

Now, let $z \in A(\ell^{\infty}) \oplus [\{y_i \mid i \in \alpha \}]$, so $z= \sum_{k=1}^{s}\beta_{k}y_{k} + \sum_{k=1}^{\infty}\lambda_{k}\gamma_{k} \xi_{k}$. If $\lambda_{k}=0$, for every $k>k_{0}$, for some $k_{0}$, then it has already been proven that $z$ is hypercyclic. Now, assume that $\lambda_{k} \neq 0$ for arbitrarily larges values of $k$ and set $M = \sup \left( \{\lambda_{k}\gamma_{k} \mid k\in\mathbb{N}\} \cup \{\beta_{k} \mid k\in\mathbb{N}\} \right)$, then 
\begin{align*}
\| T^{a(i,j)}z -  \lambda_{i}\gamma_{i} x_j\| & \leq 
\sum_{k=1}^{s} \left\| T^{a(i,j)} \beta_{k} y_k \right\| + 
\sum_{k \neq i} \left\| T^{a(i,j)} \lambda_k\gamma_{k} \xi_k \right\| + 
\left\| T^{a(i,j)} \lambda_{i}\gamma_{i} \xi_i - \lambda_{i}\gamma_{i} x_j \right\| \\
& < M( s 2^{- \pi^{-1}(i,j)} + 2^{-(i+j)} + 2^{-(2i+j)})
\end{align*}
Hence, given an open ball $B(w,\varepsilon)$, we can choose a sufficiently large $i$ such that $\lambda_{i}\neq 0$ and $M( s 2^{- \pi^{-1}(i,j)} + 2^{-(i+j)} + 2^{-(2i+j)}) < \frac{\varepsilon}{2}$. Then, we can choose $j$ such that $\lambda_{s+i}\gamma_{s +i} x_{j} \in B(w,\frac{\varepsilon}{2})$. Thus, $T^{a(i,j)}z \in B(w,\varepsilon)$.  

Therefore, $W = A(\ell^{\infty}) \oplus [\{y_i \mid i \in \alpha \}]$ satisfies the desired properties: 
\[
[\{y_i \mid i \in \alpha \}] \subset W \subset HC(T) \; \text{and} \; \dim(W) \geq \dim(\ell^{\infty}) = \mathfrak{c}. 
\]
\end{proof}

\section{Theorem \ref{thm:ext_f(T)_Freq}} \label{Sec:Proof_ext_f(T)_Freq}

We start with an auxiliary result, but interesting on its own.
\begin{proposition}
If $x$ is frequently hypercyclic, then $f(T)x$ is frequently hypercyclic for every entire function $f$. 
\end{proposition}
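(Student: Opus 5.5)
The plan is to transport frequent visits of the orbit of $x$ through the operator $f(T)$, using that $f(T)$ is continuous, commutes with $T$, and has dense range. If $f \equiv 0$ then $f(T)x = 0$, so the statement is meant for $f \neq 0$, and we assume this throughout.

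First, $f(T)$ is a well-defined bounded operator. Write $f(z) = \sum_{k\ge 0} a_k z^k$. Since $f$ is entire, $\sum_k |a_k|\,\|T\|^k < \infty$, so $f(T) = \sum_k a_k T^k$ converges in operator norm. Each partial sum commutes with $T^n$, hence $f(T)T^n = T^n f(T)$ for all $n \in \mathbb{N}$.

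Second, we need that $f(T)$ has dense range. A frequently hypercyclic operator is in particular hypercyclic, so we may invoke the same fact used in the proof of Theorem \ref{thm:ext_f(T)}: for hypercyclic $T$ and $f \in H(\mathbb{C})\setminus\{0\}$, the operator $f(T)$ has dense range \cite[Theorem 4.3.1]{aron2015lineability}. This is the only nontrivial ingredient. If one wanted to avoid citing it, the route would be to note that $T^*$ has no eigenvalues when $T$ is hypercyclic, which gives dense range for nonzero polynomials, and then to pass to entire functions. That passage is the delicate point, which is why I would rely on the cited theorem.

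Third, the density argument itself. Fix a nonempty open set $U \subset X$. By continuity and dense range of $f(T)$, the set $V := f(T)^{-1}(U)$ is open and nonempty. Since $x \in FHC(T)$, the set $N(x,V) = \{n \ge 0 : T^n x \in V\}$ has positive lower density. For every $n \in N(x,V)$ we have
\[
T^n f(T)x = f(T)T^n x \in f(T)(V) \subset U,
\]
so $N(f(T)x, U) \supset N(x,V)$. Hence
\[
\text{freq}\big(N(f(T)x,U)\big) \ge \text{freq}\big(N(x,V)\big) > 0 .
\]
As $U$ was arbitrary, $f(T)x \in FHC(T)$.
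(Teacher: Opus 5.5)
Your proof is correct and follows the paper's argument: both pull back $U$ under the continuous, dense-range operator $f(T)$, citing the same dense-range theorem, and use $T^n f(T) = f(T)T^n$ to show that $\{n : T^n f(T)x \in U\}$ contains $\{n : T^n x \in f(T)^{-1}(U)\}$, which has positive lower density. Your explicit restriction to $f \neq 0$ is a caveat the statement needs, since $f(T)x = 0$ is not frequently hypercyclic, and the paper leaves it implicit.
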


\begin{proof}
Let $U \subset X$ be a non-empty open set. Since $f(T)$ is continuous with dense range (see \cite[Theorem 4.3.1]{aron2015lineability}), $f(T)^{-1}(U)$ is a non-empty open set. Hence, $x$ has positive frequency in $f(T)^{-1}(U)$. Therefore, since $f(T)$ commutes with $T$, it follows that $f(T)x$ has positive frequency in $U$.
\end{proof}

\subsection{Proof of Theorem \ref{thm:ext_f(T)_Freq}}
\begin{proof}[Proof of Theorem \ref{thm:ext_f(T)_Freq}]
Once again, our goal is to construct a point $\xi \in FHC(T)$ that 
visits all open sets at times $n_{k}$, while $x$ is close to zero.
Let $(x_{j}) \subset X_{0}$ be a dense sequence. By \ref{CHF(1)} and \ref{CHF(2)}, there are $N_{k}$ such that for any $j \leq k$ and any finite set $F \subset \{ N_{k} , N_{k}+1 , N_{k}+2, \dots \}$ we have that
\begin{equation}
\left\| \sum_{n \in F} T^{n} x_{j} \right\| \leq \frac{1}{k2^{k}} ,
\end{equation}
\begin{equation}
\left\| \sum_{n \in F} S^{n} x_{j} \right\| \leq \frac{1}{k2^{k}} .
\end{equation}

Now let $A(i,\nu)$ be as given by Lemma \ref{lem:positive_density} and set
\[
A = \bigcup_{\nu=1}^{\infty} A(\nu,N_{\nu}) \text{ and } z_{k} = x_{j} \text{ if } k \in A(j,N_{j}).
\]
We define
\[
\xi = \sum_{k \in A} S^{n_{k}}z_{k}.
\]
To see that this series converges unconditionally, given $\varepsilon > 0$, choose $\kappa \in \mathbb{N}$ such that $\frac{1}{2^{\kappa-1}} < \varepsilon$. Then, for any finite set $F \subset \{ N_{\kappa} , N_{\kappa}+1 , N_{\kappa}+2 , \dots \}$,

\begin{align*}
\left\| \sum_{\substack{k \in A \\ n_{k} \in F}} S^{n_{k}} z_{k} \right\|
&\leq \sum_{j=1}^{\kappa} \left\| \sum_{\substack{k \in A(j,N_{j}) \\ n_{k} \in F}} S^{n_{k}} x_{j} \right\|
+ \sum_{j=\kappa + 1}^{\infty} \left\| \sum_{\substack{k \in A(j,N_{j}) \\ n_{k} \in F}} S^{n_{k}} x_{j} \right\| \\
&\leq \sum_{j=1}^{\kappa} \frac{1}{\kappa 2^{\kappa}} + \sum_{j=\kappa + 1}^{\infty} \frac{1}{j 2^{j}} \\
&< \frac{1}{2^{\kappa}} + \sum_{j=\kappa + 1}^{\infty} \frac{1}{2^{j}} \\
& = \frac{1}{2^{\kappa - 1}} < \varepsilon
\end{align*}

Now, we can show that, for any $f,g \in H(\mathbb{C})$ with $g \neq 0$, $f(T)x + g(T)\xi$ is frequently hypercyclic. Then, given any open set $U \subset X$, take some $U' \subset U$ and $\varepsilon > 0$ such that for any $u \in U'$, $B(u,\varepsilon) \subset U$. Choose some $x_{j} \in g(T)^{-1}(U')$ such that $2^{-j} < \frac{\varepsilon}{2\|g(T)\|}$. If $\kappa \in A(j,N_{j})$ and $\kappa > k_{0}$, where $k_{0}$ is such that $\|T^{n_{k}} x \| < \frac{\varepsilon}{2}$, for all $k>k_{0}$, then $|n_{\kappa} - n_{k}| \geq |\kappa - k| \geq N_{j}$, for any $k \in A \setminus \{\kappa\}$ and

\begin{align*}
& \left\| T^{n_{\kappa}} \left( f(T)x + g(T)\xi \right) - g(T)x_{j} \right\| \\
\leq & \| f(T) \| \left\| T^{n_{\kappa}} x \right\| 
+ \| g(T) \| \left( \left\| \sum_{\substack{k \in A \\ k < \kappa}} T^{n_{\kappa} - n_{k}} z_{k} \right\| 
+ \left\| \sum_{\substack{k \in A \\ k > \kappa}} S^{n_{k}-n_{\kappa}} z_{k} \right\| \right) \\
< & \| f(T) \| \left\| T^{n_{\kappa}} x \right\| + \frac{\|g(T)\|}{2^{j}} \\
< & \varepsilon.
\end{align*}

Therefore, $T^{n_{\kappa}} \left( f(T)x + g(T)\xi \right) \in U$ and, by Lemma \ref{lem:positive_density_subsequence}, 
\[
\text{freq}(\{ n_{\kappa} \mid \kappa \in A(j,N_{j}) \text{ and } \kappa > k_{0}\}) \geq \text{freq}(A(j,N_{j})) \cdot \text{freq}((n_{k})_{k\in\mathbb{N}}) >0.
\]
Moreover, that also shows that $\{f(T)x \mid f \in H(\mathbb{C})\} \cap \{g(T)\xi \mid g \in H(\mathbb{C})\} = \{0\}$, otherwise there would exist $g \neq 0$ such that $f(T)x + g(T)\xi = 0$, which cannot be frequently hypercyclic.
\end{proof}

Additionally, the same argument can be used to show the following corollary.

\begin{corollary}\label{cor:sum_entire_function_freq}
For any operator $T$ satisfying the Frequently Hypercyclicity Criterion on a separable Banach space $X$, $n \in \mathbb N$ and $x \in HC(T)$, such that $T^{n_{k}}x \to 0$, for some sequence $(n_{k})$ with positive lower density, then there exist $\xi_1, \xi_2, \ldots, \xi_n$ such that 
$$\{ f(T)x \mid f \in H(\mathbb{C})\} \oplus \left( \bigoplus_{i=1}^{n} \{ f(T)\xi_i \mid f \in H(\mathbb{C})\} \right) \subset HC(T) \cup \{0\}.$$
\end{corollary}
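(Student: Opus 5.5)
We prove Corollary \ref{cor:sum_entire_function_freq} by induction on $n$, repeating the construction from the proof of Theorem \ref{thm:ext_f(T)_Freq} at each step. The inductive claim is slightly stronger than the statement. After step $n$ we will have vectors $\xi_1,\dots,\xi_n$ and a sequence $(m^{(n)}_k)$ of positive lower density along which $x,\xi_1,\dots,\xi_n$ all tend to zero. Moreover, every vector
\[
f(T)x+\sum_{i=1}^n g_i(T)\xi_i \quad\text{with } (f,g_1,\dots,g_n)\neq 0
\]
is hypercyclic; in fact it is frequently hypercyclic, as in Theorem \ref{thm:ext_f(T)_Freq}. The base case $n=0$ is the hypothesis: take $m^{(0)}_k=n_k$. (If $f\neq 0$, then $f(T)x\in FHC(T)$ by the Proposition of Section \ref{Sec:Proof_ext_f(T)_Freq}.)

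\textbf{Inductive step.} We repeat the construction of Theorem \ref{thm:ext_f(T)_Freq} with $(m^{(n)}_k)$ in place of $(n_k)$:
\[
\xi_{n+1}=\sum_{k\in A}S^{m^{(n)}_k}z_k,
\]
with $A=\bigcup_\nu A(\nu,N_\nu)$ as before.

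\emph{Step 1: combinations with $g_{n+1}\neq 0$.} Write $y=f(T)x+\sum_{i\le n}g_i(T)\xi_i$. For $\kappa\in A(j,N_j)$ large, the same estimate as in that proof gives
\[
T^{m^{(n)}_\kappa}\bigl(y+g_{n+1}(T)\xi_{n+1}\bigr)\approx g_{n+1}(T)x_j .
\]
Indeed, the term $\|T^{m^{(n)}_\kappa}y\|$ is small because $x,\xi_1,\dots,\xi_n$ all tend to zero along $m^{(n)}$, and the $f(T)$, $g_i(T)$ are bounded. Lemma \ref{lem:positive_density_subsequence} then gives positive frequency of these return times.

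\emph{Step 2: combinations with $g_{n+1}=0$.} These are hypercyclic by the inductive hypothesis. Together with Step 1, this also yields directness of the sum: a nontrivial vanishing combination would be $0$, which is not hypercyclic.

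\textbf{Main obstacle: the new null sequence.} We must exhibit a positive-density sequence $(m^{(n+1)}_k)$ along which $\xi_{n+1}$ also tends to zero. The plan is to reserve one more set of the Lemma \ref{lem:positive_density} family, say $A(0,\cdot)$, that is never used in $A$. Concretely, we apply Lemma \ref{lem:positive_density} with indices shifted and pick a set $B=A(1,N_1')$ disjoint from $A$, with gaps $\ge N_1'$ and gaps to $A$ growing.

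For $\kappa\in B$ large, the separation $|m^{(n)}_\kappa-m^{(n)}_k|\ge|\kappa-k|$ is large for all $k\in A$. The unconditional-convergence tail estimates (for $T^n x_j$ and $S^n x_j$) then give $\|T^{m^{(n)}_\kappa}\xi_{n+1}\|\to 0$. Getting genuine decay, and not just boundedness, requires choosing the separation between $B$ and $A(j,N_j)$ to grow with $j$. This is arranged by taking the sets from Lemma \ref{lem:positive_density_alt} with $\nu$-parameters increasing, exactly as the constants $N_j$ were chosen.

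We then set $m^{(n+1)}_k=m^{(n)}_{b_k}$, where $B=\{b_k\}$. This sequence has positive lower density by Lemma \ref{lem:positive_density_subsequence}, and $x,\xi_1,\dots,\xi_{n+1}$ all tend to zero along it. This closes the induction.
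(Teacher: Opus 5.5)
There is a genuine gap, and it sits exactly in the step you flag as the main obstacle. Lemma \ref{lem:positive_density} (or \ref{lem:positive_density_alt}) gives, for $\kappa\in B=A(1,N_1')$ and $k\in A(j,N_j)$, only the separation $|\kappa-k|\ge N_1'+N_j$. This grows with $j$ but not with $\kappa$. Plugging it into the tail estimates gives, for every $\kappa\in B$,
\[
\bigl\|T^{m^{(n)}_\kappa}\xi_{n+1}\bigr\|\le 2^{2-K}\qquad\text{whenever } N_1'\ge N_K ,
\]
a bound that is uniform in $\kappa$ and does not improve as $\kappa\to\infty$. Growth in $j$ only controls the tail of the series over $j$; the pieces coming from $A(1,N_1),\dots,A(K,N_K)$ stay of fixed size. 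Genuine decay would need $\mathrm{dist}(\kappa,A(j,N_j))\to\infty$ along $B$ for each fixed $j$, and the lemma does not provide that.

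Nor can you diagonalize over sets with larger and larger parameters. By the lemma, distinct elements of $\bigcup_{\nu\ge\nu_0}\bigcup_i A(i,\nu)$ are at distance at least $2\nu_0$, so this union has upper density at most $1/(2\nu_0)$. Any sequence eventually contained in such unions with $\nu_0\to\infty$ therefore has density $0$. So you only get $\|T^{m^{(n+1)}_k}\xi_{n+1}\|\le\varepsilon_B$ for some fixed $\varepsilon_B>0$. At the next stage, Step 1 then carries an irreducible error $\|g_{n+1}(T)\|\,\varepsilon_B$, and the inductive hypothesis is never re-established.

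What is missing is that genuine decay is not needed. It suffices that the other vectors are small at scale $2^{-j}$ \emph{at the very times used to approximate $x_j$}. This is how the construction in the proof of Theorem \ref{thm:ext_CHF} works, and it handles the corollary without induction.

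Build $\xi_1,\dots,\xi_n$ simultaneously along $(n_k)$, setting $\xi_l=\sum_{k\in A_l}S^{n_k}z_k$ with $A_l=\bigcup_j A(l,j,N_j)$. For $r\in A(l,j,N_j)$, the separations $|r-k|\ge N_j+N_i$ give $\|T^{n_r}\xi_l-x_j\|\le 2^{2-j}$ and $\|T^{n_r}\xi_{l'}\|\le 2^{2-j}$ for $l'\neq l$. Meanwhile $T^{n_r}x\to 0$, since $(n_r)$ is a subsequence of $(n_k)$. If $g_l\neq 0$, pick $x_j\in g_l(T)^{-1}(U')$ with $j$ large. Then $T^{n_r}\bigl(f(T)x+\sum_i g_i(T)\xi_i\bigr)\in U$ for all large $r\in A(l,j,N_j)$. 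By Lemma \ref{lem:positive_density_subsequence} this set of times has positive lower density, and directness follows as in your Step 2.

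For the $HC$ conclusion alone, positive density is irrelevant, and your induction can also be repaired. Along $\kappa_p\in A(j_p,N_{j_p})$ with $x_{j_p}\to 0$ you do get $T^{m^{(n)}_{\kappa_p}}\xi_{n+1}\to 0$, and the construction works along any increasing sequence.

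Finally, the corollary only assumes $x\in HC(T)$, so $f(T)x$ is merely hypercyclic. Your ``in fact frequently hypercyclic'' holds only for combinations with some $g_i\neq 0$.
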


\section{Theorem \ref{thm:ext_CHF}} \label{Sec:Proof_ext_CHF}

\subsection{Proof of Theorem \ref{thm:ext_CHF}}
\begin{proof}[Proof of Theorem \ref{thm:ext_CHF}]
Since $X$ is separable, we can enumerate a dense sequence of vectors $x_{j} \in X_{0}$, for all $j \in \mathbb{N}$. 
By \ref{CHF(1)} and \ref{CHF(2)}, there are $N_{k}$ such that for any $j \leq k$ and any finite set $F \subset \{ N_{k} , N_{k}+1 , N_{k}+2, \dots \}$ we have that
\begin{equation}
\left\| \sum_{n \in F} T^{n} x_{j} \right\| \leq \frac{1}{k2^{k}} ,
\end{equation}
\begin{equation}
\left\| \sum_{n \in F} S^{n} x_{j} \right\| \leq \frac{1}{k2^{k}} .
\end{equation}

Now let $A(i,j,\nu)$ be as given by Lemma \ref{lem:positive_density_alt} and set
\[
A_{l} = \bigcup_{k=1}^{\infty} A(l,k,N_{k}) \text{ and } z_{k} = x_{j} \text{ if } k \in A(l,j,N_{j}).
\]

We define
\[
\xi_{l} = \frac{1}{2^{l}} \sum_{k \in A_{l}} S^{n_{k}}z_{k}, \text{ for each } l \in \mathbb{N}.
\]
To see that this series converges unconditionally, fix $l \in \mathbb{N}$ and, given $\varepsilon > 0$, choose $\kappa \in \mathbb{N}$ such that $\frac{1}{2^{\kappa-1}} < \varepsilon$. Then, for any finite set $F \subset \{ N_{\kappa} , N_{\kappa}+1 , N_{\kappa}+2 , \dots \}$,

\begin{align*}
\left\| \sum_{\substack{k \in A_{l} \\ n_{k} \in F}} S^{n_{k}} z_{k} \right\|
&\leq \sum_{j=1}^{\kappa} \left\| \sum_{\substack{k \in A(l,j,N_{j}) \\ n_{k} \in F}} S^{n_{k}} x_{j} \right\|
+ \sum_{j=\kappa + 1}^{\infty} \left\| \sum_{\substack{k \in A(l,j,N_{j}) \\ n_{k} \in F}} S^{n_{k}} x_{j} \right\| \\
&\leq \sum_{j=1}^{\kappa} \frac{1}{\kappa 2^{\kappa}} + \sum_{j=\kappa + 1}^{\infty} \frac{1}{j 2^{j}} \\
&< \frac{1}{2^{\kappa}} + \sum_{j=\kappa + 1}^{\infty} \frac{1}{2^{j}} \\
& = \frac{1}{2^{\kappa - 1}} < \varepsilon
\end{align*}

Now, let $r \in A(\tilde{l},j,N_{j})$. Then, 
\[
2^{l} \| T^{n_{r}}\xi_{l} \| 
\leq \left\| \sum_{\substack{k \in A_{l} \\ k < r}} T^{n_{r} - n_{k}} z_{k} \right\|
+ \left\| \sum_{\substack{k \in A_{l} \\ k > r}} S^{n_{k}-n_{r}} z_{k} \right\|
\]

In addition, if $r \in A(\tilde{l},j,N_{j})$ and $k \in A(l,i,N_{i})$, with $\tilde{l} \neq l$, then $|n_{r} - n_{k}| \geq |r-k| \geq N_{j}+N_{i}$. So, in the first sum, we have that 

\begin{align*}
\left\| \sum_{\substack{k \in A_{l} \\ k < r}} T^{n_{r} - n_{k}} z_{k} \right\| 
&\leq \sum_{i=1}^{j} \left\| \sum_{\substack{k \in A(l,i,N_{i}) \\ k < r}} T^{n_{r} - n_{k}} x_{i} \right\| 
+ \sum_{i=j + 1}^{\infty} \left\| \sum_{\substack{k \in A(l,i,N_{i}) \\ k < r}} T^{n_{r} - n_{k}} x_{i} \right\| \\
&\leq \sum_{i=1}^{j} \frac{1}{j 2^{j}} + \sum_{i=j + 1}^{\infty} \frac{1}{i 2^{i}} \\
&< \frac{1}{2^{j - 1}}.
\end{align*}

Consequently, repeating the same argument for the second term, we have that
\[
\left\| T^{n_{r}} \xi_{l} \right\| < \frac{1}{2^{l+j-2}} \text{, whenever } r \in A(\tilde{l},j,N_{j}) \text{ with }\tilde{l} \neq l.
\]

Moreover, we can show that $( [\{y_i \mid i \in \alpha \}] \oplus [ \xi_{i} \mid i \in \mathbb{N}] ) \setminus \{0\} \subset FHC(T)$. For that, consider a vector $x \in [\{y_i \mid i \in \alpha \}] \oplus [ \xi_{i} \mid i \in \mathbb{N}]$. If $x \in [\{y_i \mid i \in \alpha \}]$, by hypothesis, we have that $x \in FHC(T) \cup \{0\}$. Therefore, we assume that $x = \beta_{1} y_{1} + \dots + \beta_{s} y_{s} + \lambda_{1} \xi_{1} + \dots + \lambda_{l-1} \xi_{l-1} + \xi_{l}$ such that $\lambda_{r} \neq 0$ and $\sup\{|\beta_{1}|, \dots , |\beta_{s}| , |\lambda_1|, \dots, |\lambda_l|\} = M$. Given $j \in \mathbb{N}$, for any $r \in A(l,j,N_{j})$, 
\begin{align*}
\left\| T^{n_{r}} x - x_{j} \right\| 
&\leq M\sum_{i=1}^{s} \left\|T^{n_{r}}y_{i} \right\| 
+ M\sum_{i=1}^{l-1} \left\| T^{n_{r}} \xi_{i} \right\| 
+ \left\| \sum_{\substack{k \in A_{l} \\ k < r}} T^{n_{r} - n_{k}} z_{k} \right\| 
+ \left\| \sum_{\substack{k \in A_{l} \\ k > r}} S^{n_{k}-n_{r}} z_{k} \right\| \\
&< M\sum_{i=1}^{s} \left\|T^{n_{r}}y_{i} \right\| 
+ M\frac{l-1}{2^{j-2}} 
+ \frac{1}{2^{j-1}} 
+ \frac{1}{2^{j-1}} \\
&= M\sum_{i=1}^{s} \left\|T^{n_{r}}y_{i} \right\| + \frac{M(l-1)+1}{2^{j-2}}
\end{align*}

Since $T^{n_k} y_i \rightarrow 0$ uniformly, $\left\| T^{n_{r}} x - x_{j} \right\| < \varepsilon$ for sufficiently large values of $r$. So, repeating the same argument from Theorem \ref{thm:ext_Kitai}, we define $A:\ell^{\infty} \rightarrow X$ as before and let $z \in A(\ell^{\infty}) \oplus [\{y_i \mid i \in \alpha \}]$. Then, given $j \in \mathbb{N}$, for any $r \in A(l,j,N_{j})$, 
\begin{align*}
\left\| T^{n_{r}} z - x_{j} \right\| 
&\leq M\sum_{i=1}^{s} \left\|T^{n_{r}}y_{i} \right\| 
+ M\sum_{i \neq j} \left\| T^{n_{r}} \xi_{i} \right\| 
+ \left\| \sum_{\substack{k \in A_{l} \\ k < r}} T^{n_{r} - n_{k}} z_{k} \right\| 
+ \left\| \sum_{\substack{k \in A_{l} \\ k > r}} S^{n_{k}-n_{r}} z_{k} \right\| \\
&< M\sum_{i=1}^{s} \left\|T^{n_{r}}y_{i} \right\| 
+ M\sum_{i \neq j} \frac{1}{2^{l+j-2}} 
+ \frac{1}{2^{j-1}} 
+ \frac{1}{2^{j-1}} \\
&< M\sum_{i=1}^{s} \left\|T^{n_{r}}y_{i} \right\| + \frac{M+1}{2^{j-2}}.
\end{align*} 

Therefore, we obtain the desired extension by $[\{y_i \mid i \in \alpha \}] \subset A(\ell^{\infty}) \oplus [\{y_i \mid i \in \alpha \}] \subset FHC(T)$ and $\text{dim}(A(\ell^{\infty})) = \text{dim}(\ell^{\infty}) = \mathfrak{c}$.
\end{proof}

\section*{Acknowledgements}
F.C.S. was partially supported by the Coordenação de Aperfeiçoamento de Pessoal de Nível Superior do Brasil (CAPES) (grant 682852/2022-00). G.R. was supported by CAPES — Coordenação de Aperfeiçoamento de Pessoal de Nível Superior (Brazil) — through a postdoctoral fellowship at IMECC, Universidade Estadual de Campinas (PIPD/CAPES; Finance Code 001). R.V. was supported by the São Paulo Research Foundation (FAPESP), grant 24/15612-6, and  by Conselho Nacional de Desenvolvimento Científico e Tecnológico (CNPq), grant 314978/2023-2. 

\nocite{aron2015lineability}
\nocite{favaro2024lineability}
\nocite{bayart2006frequently}
\nocite{bayart2009dynamics}
\nocite{bes1999hereditarily}
\nocite{grosseerdmann2011linear}

\bibliographystyle{plain}
\bibliography{Ref}

\end{document}